\begin{document}

\RUNTITLE{Accuracy of the Graphon Mean Field Approximation}

\TITLE{Accuracy of the Graphon Mean Field Approximation for Interacting Particle Systems}

\ARTICLEAUTHORS{%
\AUTHOR{Sebastian Allmeier}

\AFF{Univ. Grenoble Alpes, Inria, CNRS, Grenoble INP, 38000 Grenoble, France \EMAIL{sebastian.allmeier@inria.fr}}

\AUTHOR{Nicolas Gast}

\AFF{Univ. Grenoble Alpes, Inria, CNRS, Grenoble INP, 38000 Grenoble, France \EMAIL{nicolas.gast@inria.fr}}

}




\ABSTRACT{

We consider a system of $N$ particles whose interactions are characterized by a (weighted) graph $G^N$. Each particle is a node of the graph with an internal state. The state changes according to Markovian dynamics that depend on the states and connection to other particles. 
We study the limiting properties, focusing on the dense graph regime, where the number of neighbors of a given node grows with $N$. We show that when $G^N$ converges to a graphon $G$, the behavior of the system converges to a deterministic limit, the graphon mean field approximation. We obtain convergence rates depending on the system size $N$ and cut-norm distance between $G^N$ and $G$. 
We apply the results for two subcases:
When $G^N$ is a discretization of the graph $G$ with individually weighted edges; when $G^N$ is a random graph obtained through edge sampling from the graphon $G$. In the case of weighted interactions, we obtain a bound of order $\mathcal{O}(1/N)$. In the random graph case, the error is of order $\mathcal{O}(\sqrt{\log(N)/N})$ with high probability. 
We illustrate the applicability of our results and the numerical efficiency of the approximation through two examples: a graph-based load-balancing model and a heterogeneous bike-sharing system.

}

\KEYWORDS{graphon, mean field approximation, interacting particle systems, random graph, bias bound}

\maketitle

\section{Introduction}

Mean field approximations are ubiquitously used in the study of large scale stochastic systems. Its mathematical foundations date back to the $'60$s and $'70$s with the seminal papers of \cite{kurtzSolutionsOrdinaryDifferential1970,kurtzLimitTheoremsSequences1971}, \cite{mckeanPropagationChaosClass1967} and \cite{normanMarkovProcessesLearning1972}. While originating from statistical physics, the mean field methodology has found application in many areas, such as communication networks \cite{leboudecGenericMeanField2007}, load balancing \cite{mitzenmacherPowerTwoChoices2001}, and the study of epidemics \cite{royRecentAdvancesModeling2023}. The fundamental idea of the mean field method is to represent the particle process by a Markovian state descriptor based on averaged quantities of the system. A quantity commonly used in this context is state occupancy processes, e.g., the averaged load of stations for a bike sharing system \cite{frickerIncentivesRedistributionHomogeneous2016} or the fraction of servers having at least a certain queue length for load balancing systems \cite{mitzenmacherPowerTwoChoices2001}. To apply the `classical' mean field method, it is crucial that the particles of the system are homogeneous and therefore interchangeable, ensuring the Markov property for the mentioned aggregate quantities. 
Its applicability to real world scenarios is often hindered by the intrinsically heterogeneous behavior of realistic models as heterogeneity breaks the quintessential exchangeability assumption. Therefore, it is necessary to keep track of the evolution of the whole set of particles in the system which makes the application of `classical' mean field methods complicated, if not infeasible.


To overcome these difficulties, mean field models which use graphons as a description of diversity of individuals and the variation of their interaction have been proposed \cite{budhirajaSupermarketModelGraphs2019,betWeaklyInteractingOscillators2024,vizueteGraphonBasedSensitivityAnalysis2020,keligerUniversalitySisEpidemics2023,bayraktarGraphonMeanField2023}. One example for where this approach finds widespread application are epidemic models for which graphons characterize the epidemic propagation \cite{delmasIndividualBasedSIS2023,decreusefondLargeGraphLimit2012,aurellFiniteStateGraphon2022,royRecentAdvancesModeling2023}. For many settings, results show that deterministic graphon-based models are limiting processes for sequences of graph-based interacting and densely connected particle systems. 
While the use of the graphon mean field model is well justified, the bias and convergence speed in its application can vary immensely depending on the convergence properties of the considered graph sequence. Surprisingly, the literature concerning the issue of convergence rates and accuracy is relatively sparse. This paper contributes to filling this gap as we derive generic bounds on the bias for the graphon approximation for finite-sized and densely connected interacting particle systems.




\subsection{Contributions}
We provide bias bounds for the graphon mean field approximation for finite-sized systems consisting of $N\in \N$ interacting particles for which the graph $G^N$ models the connection of the population. Our results show that it is possible to derive bias bounds which largely depend on the convergence properties of the graph sequence $G^N$ and of its limiting graphon $G$.
To be more precise, we start from a stochastic interacting particle model of finite size $N$, where each individual $k$ is characterized by a time-varying state $S_k(t)$. The connection of the particle to the population is given by the edges $(k,l)$ of a graph $G^N$. Based on this description, we construct a (deterministic) integro-differential equation based on the graphon $G$, and show that it has a unique solution $\bODE(t)$ 
that we call the graphon mean field approximation. This differential equation is constructed such that for fixed $N$, $\ODE_{k/N,s}(t)$ approximates the probability of particle $k\in[N]$ to be in state $s$ at time $t$. Denoting $\mathbb{P}(\SN_k(t) = s)$ this probability, our main result shows that
\begin{align*}
    \mathbb{P}(S^{G^N}_k(t) = s) = \ODE_{k/N,s}(t) + \mathcal{O}(\frac1N + \opnorm{G^N-G}_{L_2}),
\end{align*}
where $\opnorm{G^N-G}_{L_2}$ is the $L_2$ distance of the step graphon representation of $G^N$ and the graphon $G$, which is equivalent to the distance implied by graphon typical cut norm, see Theorem \ref{thrm:approximation_accuracy} for a precise description. 

To show the extent of the result, we consider two specific graph sampling strategies for $G^N$. In the first case (which we denote \emph{deterministic sampling}), $G^N$ is the discretization of the limiting graphon $G$, with $G^N_{kl} := G_{k/N, l/N} \in(0,1]$ being the strength of interaction between two particles. We consider this case as it illustrates how our result can be used to model 
In the second case (that we call \emph{stochastic sampling}), $G^N$ is a random graph generated from $G$, where an edge is present between two nodes $k$ and $\ell$ with probability $G(k/N,l/N)$. The second case corresponds to a generalization of \ER graph and stochastic block models to possibly non-uniform probabilities. 

Imposing some mild assumptions such as a piecewise Lipschitz condition on the graphon $G$ or symmetry of $G$, in the case of stochastic sampling, we can bound the distance between the graph $G^N$ and the graphon $G$ by:
\begin{align*} \opnorm{G^N-G}_{L_2} =
    \begin{cases} 
    \mathcal{O}(N^{-1}). &\text{ (Deterministic Sampling)} \\
    \mathcal{O}(\sqrt{\frac{\log(N)}{N}}) ~ \text{w.h.p.,} & \text{ (Stochastic Sampling)}
    \end{cases} 
\end{align*}
Here, with high probability (w.h.p) means that the right-hand side in case of the stochastic sampling holds with probability at least $1 - 2/N$. The precise assumptions and results are given in the Corollaries \ref{corollary:graphon_results} and \ref{corollary:het_approx_results}.
We point out that the main result applies to the considered cases but is not limited to those. It is possible to consider other graph sampling strategies that satisfy our assumptions and for which the $L_2$ distance between the step graphon representation of $G^N$ and the graphon $G$ can be bounded. 
To illustrate our results and emphasize their applicability, we provide two examples, one each for the stochastic and deterministic sampling method.
Our first example considers a load-balancing system with stochastically drawn connections between servers. Jobs in the load balancing system arrive at a server site, where each server similarly acts as a dispatcher and keeps or forwards the job according to the JSQ($2$) policy based on its connected neighbors. The second example illustrates the application of deterministic sampling for a bike sharing system, with particles being stations and the graphon determining the popularity of the stations.
In both cases, a simple discretized version of the integro-differential equation already yields precise estimations of the system dynamics while the numerical complexity of the approximation only slightly increases compared to the homogeneous case.

\subsection{Organization} 
The paper is organized as follows. In Section \ref{sec:particle_system} we introduce the heterogeneous particle model. Section \ref{sec:connectivity_function} defines graphon, related sampling methods, and related preliminary results. Our definition of the approximation, the main results, and the proofs are displayed in Section \ref{sec:main_results}. Last, in Section \ref{sec:numerical_experiments} we present the two numerical examples. 

\subsection{Related Work}
\paragraph{Dynamical Systems on Random Graphs} 

In recent years there has been growing interest in the behavior of interacting particles that are interconnected by an underlying graph topology, for example, \cite{bayraktarMeanFieldInteraction2021,betWeaklyInteractingOscillators2024,bhamidiWeaklyInteractingParticle2019,abbeCommunityDetectionStochastic2018} and previously mentioned references. For a general introduction to the topic of random graph networks and limiting graphon functions for dense graphs, we refer to the works \cite{vanderhofstadRandomGraphsComplex2017,lovaszLargeNetworksGraph2012}. 
The majority of papers focus on dense graphs, having edges of order $N^2$ such as \ER type graphs or graphs generated by stochastic block models. Additional related work can be found in the game theoretic setting for graphon mean field games, see for example \cite{cainesGraphonMeanField2021,aurellFiniteStateGraphon2022}. In the `not-so-dense' setting available results are more limited, with some newer references being \cite{bayraktarGraphonMeanField2023,delmasIndividualBasedSIS2023}. In the case of sparse graphs, such as $d$-regular graphs or random geometric graphs, the typical mean field methods break down as they fail to capture the importance of the spatial graph structure and its implications on the local dynamics of particles. Some recent works in this setting include \cite{gangulyHydrodynamicLimitsNonMarkovian2022,ramananMeanfieldLimitsAnalysis2022,gangulyNonMarkovianInteractingParticle2022}.

\paragraph{Load Balancing on Graphs} 
Our load balancing example is inspired by the recent works of \cite{ruttenMeanfieldAnalysisLoad2023,zhaoOptimalRateMatrixPruning2024,zhaoExploitingDataLocality2024,budhirajaSupermarketModelGraphs2019}. Here, the authors study a variety of load balancing systems with dynamics based on compatibility or locality constraints, which give rise to intricate connectivity between dispatchers and servers. While not directly transferable into the framework of this paper, the authors similarly deal with graph-based systems and limit approximations that are strongly related to the ones our framework suggests. Note that the techniques developed in these papers are very model-specific and allow for transitions to depend on the states of multiple queues, whereas our approach aims at deriving results for a more general framework for transition rates with the restriction to the case of pair interactions.

\paragraph{Generator and Stein's Method} For our proofs, we adapt techniques used in \cite{gastRefinedMeanField2017,allmeierMeanFieldRefined2022,gastExpectedValuesEstimated2017}, which in turn rely on the use of Stein's method \cite{steinApproximateComputationExpectations1986}. The method is used to estimate and bound the distance between two random variables through their respective generators. Since the works \cite{bravermanSteinMethodSteadystate2017c,bravermanSteinMethodSteadystate2017b} Stein's method has seen an increase in the stochastic network community and is an actively evolving area. 


\section{Heterogeneous Network Particle System} \label{sec:particle_system}

\subsection{The Interaction Model}

We consider particle systems with $N\in\N$ interacting particles. Connections between particles are characterized by a (possibly weighted) adjacency matrix $G^N \in[0,1]^{N \times N}$, with $G^N_{kl}$ indicating the connection strength between particle $k$ and $l$.  Each of the particles has a finite state space $\calS$ where the state of the $k$-th object at time $t\geq0$ is denoted by $\SN_k(t)$. As we see later, $G^N$ can correspond to a random graph for which $G^N_{kl}\in\{0,1\}$ indicates the presence or absence of an undirected edge between the particles $k$ and $l$ or can be an arbitrary weight matrix, see Section~\ref{sec:connectivity_function}. 
The state of the whole system is denoted by $\bSN(t) := \bigl( \SN_{1}, \dots , \SN_{N} \bigr)(t) \in \calS^N$. We assume that the process $\bSN:= (\bSN(t))_{t\geq0}$ is a continuous time Markov chain (CTMC) with the dynamics of the system  described as in the following. 
Each particle $k\in [N]$ changes its state from $s_k$ to $s_k'\in \calS$ in one of the two ways:
\begin{enumerate}[leftmargin=2.5cm]
	\item[(Unilateral)] 
    The change of state occurs at rate $r_{k, s_k \to s_{k}'}^{N, \text{uni}}$ independently of the other particles. 
	\item[(Pairwise)] 
    The change of state is triggered by another particle $l\in[N]$ that is in state $s_l\in \calS$.  This occurs at rate $r_{k,l, s_k \to s_k', s_{l}}^{N, \text{pair}} G^N_{kl}/N$.
\end{enumerate}
Note that the rate functions are assumed to be heterogeneous, i.e., they can depend on the items $k$ and $l$. The rates have a $1/N$ factor, as each particle can potentially interact with all $N-1$ remaining particles. Hence, our condition implies that the total rate of transitions of the particle system is of order $O(N)$ and that the transition rates for all particles are of the same order. As we will further discuss in Example \ref{subsec:load_balancing_example}, our results can also be used if the scaling factor depends not on the system size $N$ but on the node degrees.
We further want to point out that we restrict our framework to the interaction of two particles, which can be utilized to model many relevant interacting particle systems on graphs such as \emph{e.g.}, epidemic spreading, power-of-two-choices load-balancing, or bike sharing systems. It is nonetheless possible to use the same underlying approach to extend the framework and results to interactions of higher order. This, however, comes at the cost of increasingly cumbersome notations and limited added theoretical insight. 



\subsection{The Binary State Representation}

In order to ease computations and definitions, we will use a binary representation of the state based on indicator functions. We denote the new representation by $\bXN = ( \XN_{k,s}(t) )_{\substack{k\in[N] \ s\in\calS \\ t\geq 0}}$, where 
\begin{align*} 
    \XN_{k,s}(t) := \mathds{1}_{\{\SN_k(t) = s\}} := 
    \begin{cases}
        1 &\text{if object $k$ is in state $s$ at time $t$}, \\
        0 &\text{otherwise.}
    \end{cases}
\end{align*}
The space of attainable states is denoted by $\calX^N \subset \{ 0, 1\}^{N \times \calS}$.


While this representation is less compact than the original, it allows for an easier definition of transition rates as well as the definition of the mean field approximation. Denote by $\BFe^N_{k,s}$ a matrix of size $N\times \abs{\calS}$ whose $(k,s)$ component is equal to 1, all other entries being zero. For each $k\in[N]$, and $s_k,s_k'\in\calS$, $\XN$ jumps to $\XN+\BFe_{k,s_k}^N-\BFe_{k,s_k'}^N$ at rate
\begin{align}
    X_{k,s_k}r_{k, s_k \to s_{k}'}^{N, \text{uni}} +  X_{k,s_k}\sum_{l\in[N]}\sum_{s_{l}\in \calS} r_{k,l, s_k \to s_k', s_{l}}^{N, \text{pair}} \frac{G^N_{kl}}{N} X_{l,s_{l}}.
    \label{eq:transitions}
\end{align}
In the above equation, the first term of the rate corresponds to the unilateral transition of the particle $k\in[N]$ changing its state from $s_k$ to $s_k'$, as this transition occurs at intensity $r_{k, s_k \to s_{k}'}$ if particle $k$ is in state $s_k$ (\emph{i.e.}, $X_{k,s_k} = 1$). The second term describes the pairwise transitions leading to the state change of particle $k$ from $s_k$ to $s_k'$. Similar to the unilateral one, the transition can only happen if particle $k$ is in state $s_k$, represented in the rate by the prefactor $X_{k,s_{k}}$. The remaining factor corresponds to the interaction with other particles, expressed by the weighted sum over all other particles and their states. The intensity of the transition is scaled by $r_{k,l, s_k \to s_k', s_{l}}^{N,\text{pair}}$ and the connectivity of the particles is given by the connectivity matrix $G^N\in [0,1]^{N\times N}$.

\subsection{Drift of the System of Size \texorpdfstring{$N$}{N}}

By using the state representation $\bXN$, we define what we call the \emph{drift} of the system of $N$ particles as the expected change for $\bXN$ in state $\bX \in \calX\toN$
. It is equal to the sum of all possible transitions of the changes induced by this transition times the rate at which the transition occurs. We denote this quantity as $\bFN(\bX)$. By using Equation~\eqref{eq:transitions}, it is equal to:
\begin{align*}
    \bFN(\bX) &= \sum_{k\in[N],s_k,s_k'\in\calS}(\BFe^N_{k,s_k'}-\BFe^N_{k,s_k})\left[X_{k,s_k}r_{k, s_k \to s_{k}'}^{N,\text{uni}} +  X_{k,s_k}\sum_{l\in[N]}\sum_{s_{l}\in \calS} r_{k,l, s_k \to s_k', s_{l}}^{N,\text{pair}} \frac{G^N_{kl}}{N} X_{l,s_{l}}\right]
\end{align*}
The quantity $\bFN(\bX)$ is a vector-valued function of $\bX$. By reorganizing the above sum, $\FN_{k,s}(\bX)$ --its $(k,s)$ component-- is equal to
\begin{align*}
    \FN_{k,s_k}(\bX)=\sum_{s'\in\calS}\left[X_{k,s'}r_{k, s_k' \to s_k}^{N,\text{uni}}-X_{k,s}r_{k, s_k \to s_k'}^{N,\text{uni}} +  \sum_{l\in[N]}\sum_{s_{l}\in \calS} (X_{k,s_k'}r_{k,l, s_k' \to s_k, s_{l}}^{N,\text{pair}}-X_{k,s}r_{k,l, s_ \to s_k', s_{l}}^{N,\text{pair}} )\frac{G^N_{kl}}{N} X_{l,s_{l}}\right].
\end{align*}
In the following, it will be convenient to replace the above sum by matrix multiplications. To do so, let us denote by $X_k$ the vector $(X_{k,s})_{s\in\calS}$. The above equation can be written as:
\begin{align}
    \FN_{k,s}(\bX)= \bR^{N,\text{uni}}_{k,s}\bX_{k} + \bX^T_k\sum_{l\in[N]}  \bR^{N, \text{pair}}_{k,l,s} \bX_l\frac{G^N_{kl}}{N} ,
    \label{eq:drift_N}
\end{align}
where $\bR^{N,\text{uni}}_{k,s}$ is a row vector whose $s'$ component is $r_{k, s' \to s}^{N,\text{uni}}$ if $s'\ne s$ and $-\sum_{\tilde{s}} r_{k, s \to \tilde{s}}^{N,\text{uni}}$ for $s=s'$; and $\bR^{N,\text{pair}}_{k,l,s}$ is a matrix whose $(s',s_l)$ component is $r_{k,l, s' \to s, s_{l}}^{N,\text{pair}}$ if $s'\ne s$ and $-\sum_{\tilde{s}}r_{k,l, s \to \tilde{s}, s_{l}}^{N,\text{pair}}$ if $s=s'$.

\subsection{Representation of \texorpdfstring{$\bXN$, $G^N$ and $\bFN$}{XN, GN and FGN} as Functions from \texorpdfstring{$(0,1]$}{(0,1]}} \label{sec:representation_X_G_F_L_2}

To study the limit as $N$ goes to infinity, it will be convenient to view the functions $\bXN$ not as a vector with $N$ components $(\XN_{k})_{k\in[N]}$ but as a step function $(X^{N}_u)_{u\in(0,1]}$, where for any state $s\in\calS$, we set 
\begin{align*}
    X^{G^N}_{u,s} := X^{G^N}_{k,s}\in \{0,1\} \text{ for } u \in ((k-1)/N, k/N] .
\end{align*}
By abuse of notation, we do not introduce separate notations for the discrete and continuous variables but make the distinction by reserving the subscript letters $k,l\in[N]$ for the discrete case and $u,v\in(0,1]$ for the continuous variable. 

Similarly, we also write $G^N_{uv}=G^N_{kl}$ for $u\in((k-1)/N, k/N]$ and $v\in((l-1)/N, l/N]$, $\FN_{u,s}=\FN_{k,s}$, and $\bR^{N,\mathrm{uni}}_{k,s}=\bR^{N,\mathrm{uni}}_{u,s}$.  By using this notation, the sum of $k\in[N]$ --for instance in \eqref{eq:drift_N}-- can be replaced by an integral, i.e., $\FN_{k,s}(\bX) = \FN_{u,s}(\bX) = \bR^{N,\text{uni}}_{u,s}\bX_u  + \bX^T_u\int_0^1 \bR^{N, \text{pair}}_{u,v,s} \bX_v G^N_{uv}dv$ for $u\in((k-1)/N, k/N]$.


\subsection{Notations}

Throughout the paper, matrices, and vectors are written in bold letters, i.e. $\bX, \bx,\dots$, and regular letters are used to denote scalars like $X_{u,s}, r_{u,s\to s'}^\text{uni}$. 
The indices $s,s_k, s_k', s_l,...$ are reserved for the states, $k,l,...$ are reserved for particles, and $u,v$ are reserved for values in the unit interval. When we write that a quantity $h$ is of order $\mathcal{O}(1/N)$ or $h = \mathcal{O}(1/N)$ equivalently, this means that there exists a constant $C$ such that $h \leq \frac{C}{N}$.
Most of our results will be expressed in terms of $L_2$ norm (see the definition in Section~\ref{ssec:graphon_distance}). The space $L_2(0,1]$ refers to the quotient space of the square Lebesgue integrable functions. Throughout we will deal with vectors of $L_2(0,1]$ functions, i.e., $f, g \in L_2(0,1]^\calS$. We for the vectors $g,f$ we denote the scalar product  by $\langle f,g \rangle_{L_2(0,1]} = \sum_{s\in\calS} \int_0^1 f_{u,s} g_{u,s} \ du$ for and induced norm $\norm{f}_{L_2} = \sqrt{\sum_{s\in\calS} \int_0^1 f_u^2 du}$. For a function $G:(0,1]^2\to\R$ denote by $\opnorm{G}_{L_2} = \sup_{\{f\in L_2(0,1], \norm{f}_{L_2(0,1]} \leq 1\}} \sqrt{ \int_0^1 (\int_0^1G_{uv} f_v \ dv)^2 du}$ the $L_2$ operator norm.

\section{Limiting Graph and Graphon} \label{sec:connectivity_function}

In this section, we specify the properties that the interaction graph $G^N$ needs to satisfy as $N$ goes to infinity. To do so, we introduce the notion of graphon and define the associated cut-norm. We also introduce two sampling methods that can be used to generate a graph with $N$ nodes from a graphon. This section only reviews the material that is necessary for our results, and we refer to the famous book \cite{lovaszLargeNetworksGraph2012} for a detailed introduction to graphons.

\subsection{Graphons} \label{sec:graphons}

In this paper, what we call a graphon is a measurable function\footnote{In the literature, graphons are often restricted to symmetric functions, which correspond to undirected graphs. This restriction is not needed for our case.} $G:(0,1]^2 \to (0,1]$. The notion of graphon can be viewed as a generalization of the notion of graph. Indeed, for any $N$, a weighted graph $G^N$ can be viewed as a piecewise constant function defined on $(0,1]^2$, where the value of this function at a point $(u,v)\in(0,1]^2$ is equal to $G^N_{kl}$ whenever $u\in((k-1)/N, k/N]$ and $v\in((l-1)/N, l/N]$. Hence, a finite graph is a graphon that has a special structure. The notion of graphon generalizes the notion of finite graph by allowing $G$ to be any measurable function. We provide an illustration of a graphon and of a finite graph viewed as a graphon on Figure~\ref{fig:graphon_discretization_example}.

Throughout the paper, we consider piecewise Lipschitz continuous graphons, which are defined as follows.
\begin{definition}[Piecewise Lipschitz Graphon] \label{def:piecewise_lipschitz_graphon}
A graphon $G$ is called piecewise Lipschitz if there exists a constant $L_G$ and a finite partition of $(0,1]$ of non-overlapping intervals $\mathcal{A}_k = (a_{k-1},a_k]$ with $0= a_0< a_1 < ... < a_{K_G} = 0$ for a finite $K_G\in \N$, such that for any $k_1, k_2 \in [K+1]$ and pairs $(u,v),(u',v') \in \mathcal{A}_{k_1} \times \mathcal{A}_{k_2}$ 
\begin{align*}
\abs{G_{uv} - G_{u'v'}} \leq L_G (\abs{u - u'} + \abs{v-v'}).
\end{align*}
\end{definition}
A particular case of a piecewise Lipschitz continuous graphon is the case of a step graphon, that is such that the Lipschitz constant $L_P=0$. This implies that the function $G$ is constant on all intervals $\mathcal{A}_i \times \mathcal{A}_j$. For instance, all finite graphs can be seen as step graphons by using the partition such that $\mathcal{A}_k=(k-1/N, k/N]$ with $N$ being the number of nodes in the graph.

\begin{figure}[htb]
    \centering
    \title{Stochastic Graph Sampling, Graphon discretization, Graphon Illustrations}
    \begin{subfigure}[t]{0.25\textwidth}
        \includegraphics[width=\linewidth]{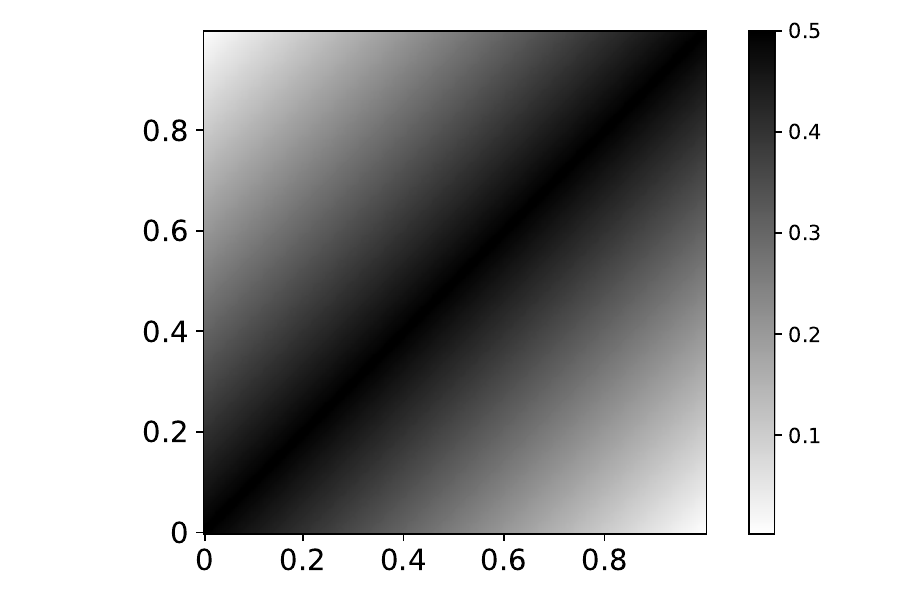}
        \caption{Graphon corresponding to $G_{uv}=\frac{1}{2}(1 - \abs{u-v})$ } \label{fig:lb_graphon_illustration}
    \end{subfigure}%
    \qquad
    \begin{subfigure}[t]{0.25\textwidth}
        \includegraphics[width=\linewidth]{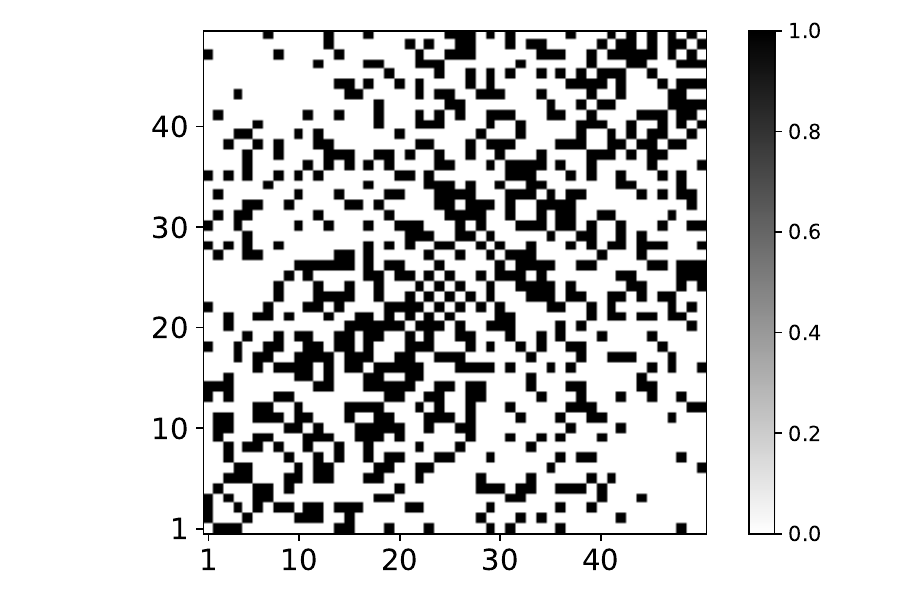}
        \caption{Step graphon associated to a sampled graph with $N=50$ nodes.}
    \end{subfigure}
    \qquad
    \begin{subfigure}[t]{0.25\textwidth}
        \includegraphics[width=\linewidth]{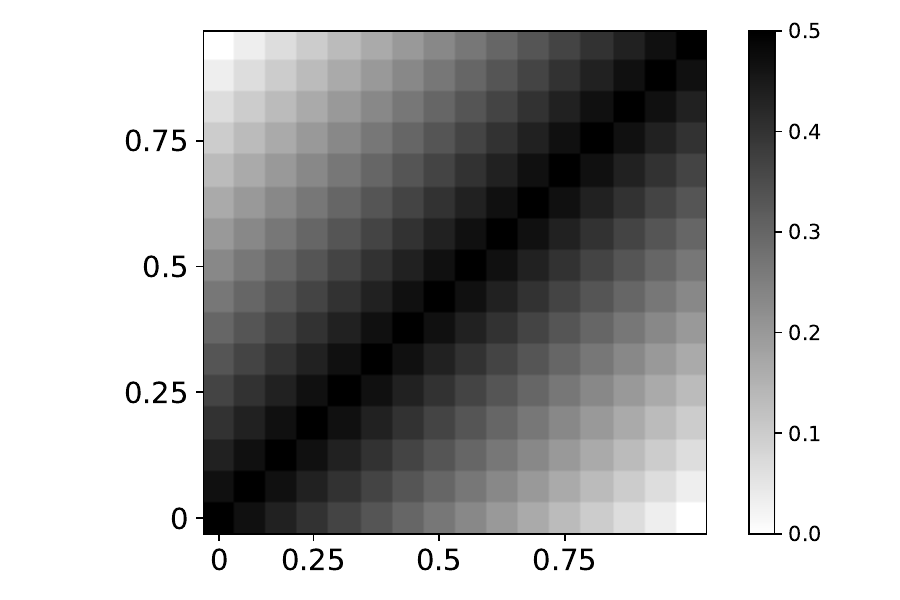}
        \caption{Step graphon obtained from graphon discretization for $N=16$.}
    \end{subfigure}\hfill
    \caption{Exemplary connectivity functions sampled after the methods of Section \ref{definition:graph_sampling} and their corresponding graphon.}
    \label{fig:graphon_discretization_example}
\end{figure}


\subsection{Generation of a Finite Graph \texorpdfstring{$G^N$}{GN} from a Graphon \texorpdfstring{$G$}{G}}
\label{definition:graph_sampling}
Based on a given graphon $G$, we consider two distinct sampling methods to generate a finite graph $G^N$ from $G$:
\begin{itemize}
    \item In the first case, termed \emph{deterministic sampling}, $G^N$ is a discretization of $G$ on $N^2$ uniformly sampled points, i.e., $G^N_{kl} = G(k/N,l/N)$ for $k,l\in[N]$.
    \item For the second case, termed \emph{stochastic sampling}, and under the preliminary assumption that the graphon is symmetric, the values $G^N$ are drawn according to independent Bernoulli random variables, i.e.,
        $G^N_{kl} = \mathrm{Bernoulli}\bigl(G(k/N,l/N)\bigr)$, and $G^N_{lk}=G^N_{kl}$ for all $k<l$.
\end{itemize}
In the case that the graphon is constant for all $u,v\in(0,1]$, the stochastic sampling method is equivalent to sampling an \ER graph. If the graphon is block-wise constant, the sampling is similar to the stochastic block model; see \cite{vanderhofstadRandomGraphsComplex2017}. 

\subsection{Graphon Distances and Convergence}
\label{ssec:graphon_distance}

To measure the distance between two graphs (and in particular to quantify how fast does a finite graph $G^N$ converge to $G$), we will use the $L_2$ operator norm. More precisely, for a measurable function $f:(0,1]\to\R$, we denote by $\norm{f}_{L_2(0,1]} = \sqrt{\int_0^1 f_u^2 du}$ the $L_2$ norm of $f$. For a graphon $G$, we denote by $\opnorm{G}$ the operator norm of $G$ in $L_2$, that is:
\begin{align*}
    \opnorm{G}_{L_2} = \sup_{\{f\in L_2(0,1] \text{ such that } \norm{f}_{L_2(0,1]} \leq 1\}} \norm{ Gf}_{L_2}.
\end{align*}
The distance between two graphons (or finite graphs) $G$ and $G'$ is measured as $\opnorm{G-G'}$.

To bound the $L_2$ distance between a piecewise Lipschitz graphon and the function associated with a randomly sampled graph, we utilize the results displayed in \cite{avella-medinaCentralityMeasuresGraphons2018}. The authors show that for a large enough (see Definition \ref{def:large_enough_N}) number $N$ of graph nodes, in our case particles, the distance between the graphon and the graph function can be upper bounded as follows:

\begin{lemma}[Theorem 1 from \cite{avella-medinaCentralityMeasuresGraphons2018}] \label{lemma:graphon_distance}
 Let $G$ be a symmetric piecewise Lipschitz graphon, and let $G^N$ be a stochastic sampling of it as defined in \ref{definition:graph_sampling}. Then, for large enough $N$ (as in Definition~\ref{def:large_enough_N}) with probability as least $1 - \delta$ the distance in the $L_2$ operator norm between the graphon $G$ and the sampled graph $G^N$ is bounded by
\begin{align}
\opnorm{G - G^N}_{L_2} \leq \sqrt{\frac{4 \log{(2N/\delta)}}{N}} + 2\sqrt{\frac{(L^2_G - K^2_G)}{N^2} +\frac{K_G}{N}} =: \psi_{\delta, G}(N). \label{eq:graphon_sampled_graph_distance}
\end{align}
\end{lemma}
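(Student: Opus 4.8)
\emph{Proof sketch (proposal).} I would decompose the error through the \emph{deterministic} sampling of $G$. Let $\widetilde{G}$ denote the step graphon with $\widetilde{G}_{uv}=G(k/N,l/N)$ for $(u,v)$ in the block $((k-1)/N,k/N]\times((l-1)/N,l/N]$, so that, since the sampled entries satisfy $\mathbb{E}[G^N_{kl}]=G(k/N,l/N)$, the kernel $\widetilde G-G^N$ is centered. Then $\opnorm{G-G^N}_{L_2}\le\opnorm{G-\widetilde G}_{L_2}+\opnorm{\widetilde G-G^N}_{L_2}$, and I expect the first term to produce the $2\sqrt{(L_G^2-K_G^2)/N^2+K_G/N}$ part of the bound and the second term the $\sqrt{4\log(2N/\delta)/N}$ part.

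For the deterministic term I would bound the operator norm by the Hilbert--Schmidt norm, $\opnorm{G-\widetilde G}_{L_2}^2\le\int_0^1\!\int_0^1|G_{uv}-\widetilde G_{uv}|^2\,du\,dv$, and split $(0,1]^2$ into blocks. On a block whose two side intervals each lie inside a single piece $\mathcal A_i$ of the partition of Definition~\ref{def:piecewise_lipschitz_graphon}, the piecewise Lipschitz property gives $|G_{uv}-\widetilde G_{uv}|\le 2L_G/N$; the remaining ``irregular'' blocks are those whose side intervals straddle one of the $K_G-1$ interior breakpoints, of which there are $O(K_G N)$, each of area $N^{-2}$, and there one uses only $|G_{uv}-\widetilde G_{uv}|\le1$. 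Summing gives a bound of the form $\sqrt{O(L_G^2/N^2)+O(K_G/N)}$; careful bookkeeping of the number of regular versus irregular blocks yields exactly $2\sqrt{(L_G^2-K_G^2)/N^2+K_G/N}$. This part is routine.

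The random term is where the work lies, and I consider it the main obstacle. The key reduction is that for a step kernel on the uniform $N$-partition the $L_2(0,1]\to L_2(0,1]$ operator norm equals $1/N$ times the spectral norm of the associated $N\times N$ value matrix: the supremum over $L_2$ test functions is attained on step functions, and substituting a step function gives the $1/N$ scaling directly. Hence $\opnorm{\widetilde G-G^N}_{L_2}=\tfrac1N\|M\|_2$ with $M_{kl}=G(k/N,l/N)-G^N_{kl}$, a symmetric matrix whose upper-triangular entries are independent, mean zero, bounded by $1$, and of variance $G(k/N,l/N)(1-G(k/N,l/N))\le\tfrac14$. Writing $M=\sum_{k\le l}Z_{kl}$ as a sum of independent mean-zero matrices of rank at most $2$ with $\|Z_{kl}\|_2\le1$ and matrix variance $\|\sum_{k\le l}\mathbb{E}[Z_{kl}^2]\|_2\le N/4$, the matrix Bernstein inequality gives $\mathbb{P}(\|M\|_2\ge\sqrt{4N\log(2N/\delta)})\le\delta$ as soon as $N$ is large enough for $\sqrt{N\log(2N/\delta)}$ to dominate the per-summand bound (which is the role of the ``large enough $N$'' of Definition~\ref{def:large_enough_N}). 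Dividing by $N$ gives $\opnorm{\widetilde G-G^N}_{L_2}\le\sqrt{4\log(2N/\delta)/N}$ with probability at least $1-\delta$, and adding the deterministic bound completes the argument. The delicate points are the identification of the step-kernel operator norm with a finite-dimensional spectral norm and obtaining the $\sqrt{N\log N}$ rate (rather than the crude $O(N)$ from Hilbert--Schmidt or Schur bounds) with the stated constants; both are standard but require care.
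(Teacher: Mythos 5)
This lemma is not proved in the paper: it is imported verbatim as Theorem~1 of \cite{avella-medinaCentralityMeasuresGraphons2018}, so there is no internal proof to compare against. Your sketch essentially reconstructs the argument of that reference, and it is sound: the triangle-inequality split through the deterministic discretization $\widetilde G$, the Hilbert--Schmidt/piecewise-Lipschitz estimate for $\opnorm{G-\widetilde G}_{L_2}$ (with the straddling blocks handled by the trivial bound, which is exactly where the ``large enough $N$'' condition $2/N<\min_k(a_k-a_{k-1})$ is needed so that each sampling interval meets at most one breakpoint), and the reduction of $\opnorm{\widetilde G-G^N}_{L_2}$ to $\tfrac1N\|M\|_2$ followed by a spectral-norm concentration bound. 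The identification of the step-kernel operator norm with $\tfrac1N$ times the matrix spectral norm is correct (the kernel operator depends on $f$ only through its block averages, so the supremum is attained on step functions, and the change of variables $d_l=c_l/\sqrt N$ produces the $1/N$ factor), and your matrix-Bernstein computation with variance proxy $N/4$ does yield $\|M\|_2\le\sqrt{4N\log(2N/\delta)}$ with probability $1-\delta$ once $t=\sqrt{4N\log(2N/\delta)}=o(N)$, which holds for the stated range of $N$ and $\delta$. The only items left at the level of ``bookkeeping'' are the exact constants $2\sqrt{(L_G^2-K_G^2)/N^2+K_G/N}$ (which come from an inclusion--exclusion count of the irregular blocks) and the treatment of the diagonal entries $k=l$, whose contribution to the operator norm is $O(1/N)$ and harmless; neither affects the validity of the argument.
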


\begin{definition}[Large Enough $N$ \cite{avella-medinaCentralityMeasuresGraphons2018}] \label{def:large_enough_N} 
Given a piecewise Lipschitz graphon $G$ with partition of $(0,1]$ of non-overlapping intervals $\mathcal{A}_k = (a_{k-1},a_k]$ as in Definition \ref{def:piecewise_lipschitz_graphon} and $\delta < e^{-1}$. The quantity $N$ is called `large enough', if 
\begin{subequations}
\begin{align}
& \frac{2}{N} < \min_{k\in \{0..K_G\}} (a_{k} - a_{k-1}), \\
& \frac1N\log\left(\frac{2N}{\delta}\right) + \frac{2K_G + 3L_G}{N} < \sup_{u\in(0,1]} \int_0^1 G_{u,v}\ dv \quad \text{and} \\
& Ne^{-N/5} < \delta.
\end{align}
\end{subequations}
\end{definition}

Implied by Theorem \ref{lemma:graphon_distance}, by setting $\delta_N = \frac{2}{N}$, we see that the distance between a piecewise Lipschitz graphon and a sampled graph is with high probability of order $\opnorm{G - G^N}_{L_2(0,1]} = \mathcal{O}\left(\sqrt{\frac{\log(N)}{N}}\right)$. The meaning of `with high probability' in this context is that the right-hand side holds with probability at least $1 - 2/N$.

\section{Main Results} \label{sec:main_results}


\subsection{Assumptions}


In Section~\ref{sec:particle_system}, we constructed a model that depends on a scaling parameter $N$. We state the necessary assumptions that will be used to state the accuracy of the graphon mean field approximation. 
\begin{enumerate}[label=($A_\text{\arabic*}$)]
    \item The state space $\calS$ is finite. \label{assumption:finite_states}
    \item The graphon $G$ is piecewise Lipschitz continuous. \label{assumption:piecewise_lipschitz_graphon}
    \item There exists bounded and piecewise Lipschitz-continuous rate functions for $r_{u,s_u\to s_u'}^{\text{uni}}$ and $r_{u,v,s_u\to s_u', s_{v}}^{\text{pair}}$ for $u,v \in (0,1]$ and $s_u,s_u',s_v \in \calS$ such that the rates function of the original $N$ particle systems have the relation:
    \begin{align}
        r_{k/N,s_k\to s_k'}^{\text{uni}} = r_{k, s_k \to s_{k}'}^{N, \text{uni}} \quad \text{ and } \quad r_{k/N, l/N,s_k\to s_k', s_l}^{\text{pair}}= r_{k, l, s_k \to s_{k}', s_l}^{N, \text{pair}} \quad \text { for } k, l\in [N].
        \label{eq:rate_N}
    \end{align}
    \label{assumption:bound_lipschitz_transitions}
\end{enumerate}
The first assumption is technical and simplifies the definition of the $L_2$ space. The second assumption is very classical regularity assumption when studying sequences of graphs that converge to graphons \cite{lovaszLargeNetworksGraph2012}. In practice, our bounds will depend on the distance between the original graph $G^N$ and the graphon $G$. The last assumption ensures that the particle transition rates for the finite and the graphon system are equal. We point out that assumption \ref{assumption:bound_lipschitz_transitions} can be generalized by replacing the equality in \eqref{eq:rate_N} with bounds for the distance of $r^{N,\cdot}$ and $r^{\cdot}$ therefore modifying the bounds of the theorem to include terms of the form $\norm{r^{N,\cdot}-r^{\cdot}}$.


\subsection{The Graphon Mean Field Approximation} \label{sec:limiting_system}

The graphon mean field approximation aims at approximating the dynamics of the original system $\bX$. We define the graphon related drift 
similarly to the drift of the particle system in Equation \eqref{eq:drift_N}. For $u,v\in (0,1]$ the graphon based drift is defined by 
\begin{align}
    \FG_{u,s}(\bx) = \bR^{\text{uni}}_{u,s}\bx_u + \bx_u^T \int_0^1 \bR^{\text{pair}}_{u,v,s} G_{u,v}\bx_v dv. \label{eq:ode_drift_definition}
\end{align}
This equation is identical to the original drift equation \eqref{eq:drift_N} with two modifications: First, the rates do not depend on $N$. Second, the discrete variables $k,l\in[N]$ are replaced by continuous variables  $u,v\in(0,1]$, which notably replaces the sum over $l$ by an integral over $v$.
    
Based on the drift function $\bFG$ and the initial condition $\bx_0$, we call the graphon mean field approximation the solution of the differential equation 
\begin{align}
    \bODE(t, \bx_0) = \bx_0 + \int_0^t \bFG(\bODE(\tau,\bx_0))d\tau. \label{eq:definition_ode}
\end{align}

\begin{lemma} \label{lemma:unique_solution}
    Let $\bFG$ denote the deterministic drift defined in Equation \eqref{eq:ode_drift_definition} and $\bODE(t, \bx_0)$ the solution of the graphon mean field approximation at time $t$ with initial condition $\bx_0$ as in Equation \eqref{eq:definition_ode}. It holds that $\bODE$ is well-defined, has a unique solution and is differentiable with respect to its initial condition. Furthermore, this derivative is Lipschitz continuous.
\end{lemma}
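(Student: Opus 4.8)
The plan is to prove this by the standard Picard–Lindelöf (Cauchy–Lipschitz) route adapted to the Banach space $E := L_2(0,1]^{\calS}$, together with a differentiation-of-flow argument. First I would establish that the drift map $\bFG : E \to E$ is globally Lipschitz. For this, write $\FG_{u,s}(\bx) - \FG_{u,s}(\by)$ using \eqref{eq:ode_drift_definition} and split into the unilateral part and the pairwise part. The unilateral part is linear in $\bx$ with coefficients $\bR^{\text{uni}}_{u,s}$ that are bounded by \ref{assumption:bound_lipschitz_transitions}, so it is Lipschitz with a constant controlled by $\sup_{u,s}\|\bR^{\text{uni}}_{u,s}\|$ and $|\calS|$. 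The pairwise part is bilinear: $\bx_u^T\int_0^1 \bR^{\text{pair}}_{u,v,s}G_{u,v}\bx_v\,dv - \by_u^T\int_0^1 \bR^{\text{pair}}_{u,v,s}G_{u,v}\by_v\,dv$. I would add and subtract a mixed term $\bx_u^T\int \bR^{\text{pair}}G(\bx_v-\by_v)dv$, bound the integrand using boundedness of $\bR^{\text{pair}}$ and $G\le 1$, apply Cauchy–Schwarz in $v$, and use the crucial a priori bound that the dynamics stay in the simplex-valued set: since $\bx_u\in\Delta(\calS)$ for each $u$ (because $\sum_s \FG_{u,s}=0$ and the initial condition is a probability vector, the total mass in each fibre is preserved and nonnegativity is preserved on the boundary), we have $\|\bx_u\|\le 1$ pointwise, hence $\|\bx\|_{L_2}\le\sqrt{|\calS|}$. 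This makes the bilinear term effectively Lipschitz on the invariant region. So $\bFG$ restricted to this invariant, bounded, closed convex subset of $E$ is Lipschitz.

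With the Lipschitz bound in hand, existence and uniqueness of $\bODE(\cdot,\bx_0)$ on $[0,\infty)$ follows from the contraction-mapping / Picard iteration argument applied to the integral equation \eqref{eq:definition_ode} on the complete metric space $C([0,T],E)$ for any $T$, extended to all of $[0,\infty)$ because the invariant region is bounded so there is no blow-up. I would also need to check forward invariance of the probability-vector region carefully — that the flow does not leave $\prod_u\Delta(\calS)$ — which is the natural place one could slip, since it uses both the conservation $\sum_s\FG_{u,s}(\bx)=0$ and the sign structure of the rate matrices on faces of the simplex (entries $\FG_{u,s}\ge 0$ whenever $x_{u,s}=0$).

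For differentiability with respect to $\bx_0$ and Lipschitz continuity of that derivative, I would invoke the standard theory of smooth dependence on initial conditions for ODEs in Banach spaces: since $\bFG$ is not merely Lipschitz but in fact $C^1$ with locally Lipschitz derivative — indeed its unilateral part is linear and its pairwise part is bilinear, so $D\bFG$ is affine in $\bx$ and hence globally Lipschitz on the bounded invariant region — the flow $\bx_0\mapsto\bODE(t,\bx_0)$ is $C^1$, its derivative solves the variational (linearized) equation $\partial_t W = D\bFG(\bODE(t,\bx_0))\,W$ with $W(0)=\mathrm{Id}$, and a Grönwall estimate on the difference of two such variational solutions (driven by the Lipschitzness of $D\bFG$ and of the flow itself) gives the Lipschitz continuity of the derivative, uniformly on compact time intervals.

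The main obstacle I expect is not any single estimate but making the bilinear/quadratic pairwise term behave: on all of $L_2$ a bilinear map is only \emph{locally} Lipschitz, so the whole argument hinges on first proving that the relevant dynamics are confined to the bounded region $\prod_{u}\Delta(\calS)$ (equivalently $\|\bx_u\|\le 1$ a.e.), and then doing every Lipschitz estimate on that region. Establishing this invariance rigorously in the $L_2$/measurable-fibre setting — rather than pointwise in a naive way — is the delicate step; once it is in place, the Lipschitz bound, Picard iteration, global existence, and the variational-equation argument for $C^1$ dependence with Lipschitz derivative are all routine.
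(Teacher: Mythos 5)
Your proposal follows essentially the same route as the paper: a (local) Lipschitz estimate for the quadratic drift on the bounded region $\|\bx_u\|\le 1$, standard Banach-space ODE theory for existence and uniqueness, and the explicit (affine-in-$\bx$, hence Lipschitz) derivative of the drift feeding into the variational equation to get $C^1$ dependence on the initial condition with a Lipschitz derivative. Your extra care about forward invariance of the probability-simplex region (needed to rule out blow-up and to justify doing all estimates on a bounded set) is a point the paper leaves implicit, but it does not change the argument's structure.
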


The proof is postponed to Section~\ref{proof:unique_solution}.

\subsection{Accuracy of the Approximation}


The following result provides a bound on the distance between the stochastic system and the graphon mean field approximation. This bound is stated as the $L_2$ distance between $\esp{\bXGN}$ and $\bODE$. Recall that by definition, $\E[\XGN_{k,s}(t)] = \mathbb{P}(S_k(t) = s)$ is the probability for an item $k$ to be in state $s$ at time $t$. Hence, our theorem shows that the graphon mean field model is an accurate approximation of the state distribution of the particles. The statement of the theorem should be interpreted as saying that the distribution of the particles over the states $\calS$ for any time $t\geq0$ is approximated by the graphon mean field approximation $\bODE$ with accuracy depending on the system size $N$ as well as the distance between $G^N$ and graphon $G$. In particular, if $G^N$ converges to $G$ (for the graphon norm), then the graphon mean field approximation is asymptotically exact.



\begin{theorem}[$L_2$ convergence] \label{thrm:approximation_accuracy}
Let $\bXN(t) = (\XN_{k,s})(t)_{k\in[N], s\in \calS}$ be the stochastic particle system of size $N$ related to a graph instance $G^N$. Let $\bODE (t) = (\ODE_{u,s}(t,\bx_0) )_{u \in(0,1], s \in \calS}$ be the mean field approximation of the particle system as defined in Equation \eqref{eq:definition_ode} for an initial condition $\bx_0 = \bXN(0)$\footnote{Here, $\bXN(0)$ is interpreted as a vector in $L_2(0,1]^{\abs{\calS}}$.}. Assume additionally that conditions \ref{assumption:finite_states} - \ref{assumption:bound_lipschitz_transitions} are fulfilled and let $t>0$ be arbitrary but fixed. Then, there exist constants $C_A, C_B \geq 0$ such that
\begin{align}
\norm{ \E[\bXN(t) \mid \bXN(0), G^N ] - \bODE(t,\bXN(0)) }_{L_2} \leq \frac{C_A}{N} + C_B \opnorm{G^N -G}. \label{eq:thrm_approximation_accuracy}
\end{align}
\end{theorem}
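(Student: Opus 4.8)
The plan is to use the generator comparison (Stein's) method, comparing the Markov generator of the stochastic particle system $\bXN$ with the "carré-du-champ"-style operator associated with the flow $\bODE$ of the graphon mean field ODE. Introduce $\Phi(t,\bx) := \bODE(t, \bx)$, the solution of \eqref{eq:definition_ode} started from $\bx$; by Lemma~\ref{lemma:unique_solution} this is well-defined, differentiable in $\bx$, and its derivative is Lipschitz. Fix the horizon $t>0$ and define $h(\bx) := \norm{\Phi(t,\bx)}$-type test functions; more precisely, for the $L_2$-valued comparison I would test against the family of functionals $\bx \mapsto \langle \Phi(t-\tau,\bx), f\rangle_{L_2}$ for $f\in L_2(0,1]^{\calS}$ with $\norm{f}_{L_2}\le 1$, and take a supremum over $f$ at the end. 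The classical identity gives
\begin{align*}
  \E[\bXN(t)\mid \bXN(0)] - \bODE(t,\bXN(0))
  = \int_0^t \E\!\left[\bigl(L^N - L^\Phi\bigr)\,\Phi(t-\tau, \cdot)\,\bigl(\bXN(\tau)\bigr)\right] d\tau,
\end{align*}
where $L^N$ is the generator of $\bXN$ (read off from \eqref{eq:transitions}) and $L^\Phi$ is the "mean field" transport operator $g\mapsto \sum_{k,s}\FN_{k,s}\,\partial_{x_{k,s}}g$, up to the swap from $\FN$ to $\FG$. So the whole estimate reduces to bounding the integrand pointwise in $\bXN(\tau)$.

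The integrand splits into two pieces. The first piece, $(L^N - L^{F^N})\Phi$, is the discrepancy between the true jump generator and its first-order (drift) approximation; this is where the $C_A/N$ term comes from. Since the test function $\Phi(t-\tau,\cdot)$ has a Lipschitz first derivative (Lemma~\ref{lemma:unique_solution}), a second-order Taylor expansion of $g(\bx + \BFe_{k,s_k'}-\BFe_{k,s_k}) - g(\bx)$ controls $(L^N - L^{F^N})g$ by a sum, over all transitions, of (rate)$\times$(jump size)$^2$ times the Lipschitz constant of $Dg$. Each jump has size $O(1)$ in the two affected coordinates $k$; the unilateral rates are $O(1)$ and there are $N$ of them, and the pairwise rates carry the $G^N_{kl}/N$ factor and there are $N^2$ of them — so the total transition rate is $O(N)$, but the jumps only affect two of the $N$ blocks, so when everything is re-expressed in the $L_2$ (per-unit-mass) normalization the sum contributes $O(1/N)$. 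This is the standard $O(1/N)$ mean field bias computation adapted to the $L_2$/graphon bookkeeping. The second piece, $L^{F^N}\Phi - L^{F^G}\Phi = \sum_{k,s}(\FN_{k,s} - \FG_{k/N,s})\,\partial_{x_{k/N,s}}\Phi$, measures how far the discrete drift $\bFN$ is from the graphon drift $\bFG$ evaluated at the same step-function state. By Assumption~\ref{assumption:bound_lipschitz_transitions} the rate functions agree exactly at the grid points, so $\FN_{k,s}(\bX) - \FG_{k/N,s}(\bX)$ is precisely $\bX_k^T\bigl(\sum_l \bR^{\text{pair}}_{k,l,s}\bX_l G^N_{kl}/N - \int_0^1 \bR^{\text{pair}}_{k/N,v,s} G_{k/N,v}\bX_v\,dv\bigr)$. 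Writing this as $\int_0^1 \bR^{\text{pair}}_{u,v,s}\bX_v (G^N_{uv}-G_{uv})dv$ plus a piecewise-Lipschitz approximation error of the kernel $\bR^{\text{pair}}$ (which is $O(1/N)$, absorbed into $C_A/N$), the main term, when integrated against $\partial\Phi$ and supremized over test functions $f$, is bounded by $\opnorm{\bR^{\text{pair}}}_\infty \cdot \opnorm{G^N - G}_{L_2}\cdot \|D\Phi\|$, giving the $C_B\,\opnorm{G^N-G}$ contribution. Boundedness of $D\Phi$ uniformly on the horizon $[0,t]$ follows from a Grönwall argument on the variational equation of \eqref{eq:definition_ode}, using boundedness of the rates and of $\opnorm{G}_{L_2}$.

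Assembling: the integrand is bounded by $C_A'/N + C_B'\opnorm{G^N-G}$ uniformly in $\tau\in[0,t]$ and in the realization of $\bXN(\tau)$ (all states lie in $\calX^N$, so $\|\bX_u\|\le 1$ pointwise), and integrating over $\tau\in[0,t]$ just multiplies by $t$, which is absorbed into the constants $C_A,C_B$ (allowed to depend on $t$). Taking the supremum over $f$ with $\norm f_{L_2}\le1$ converts the scalarized bound back into the $L_2$ norm on the left of \eqref{eq:thrm_approximation_accuracy}. The main obstacle I anticipate is not any single inequality but the bookkeeping of norms: one must be careful that the generator identity is justified (Dynkin / fundamental theorem of calculus along the flow, which needs $\Phi(t-\cdot,\bXN(\cdot))$ to be a semimartingale with the right compensator — standard but needs $C^{1,1}$ regularity of $\Phi$ from Lemma~\ref{lemma:unique_solution}), and that the "per-block" Euclidean estimates on jumps and drifts aggregate correctly into the $L_2$ operator norm $\opnorm{G^N-G}$ rather than a cruder $L_1$ or entrywise norm — this is exactly why the operator norm, and not e.g.\ the cut norm directly, is the natural quantity, and it is where the pairwise interaction term must be handled as a bilinear form in $\bX$ tested against $D\Phi$.
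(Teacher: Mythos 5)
Your proposal is correct and takes essentially the same route as the paper: the paper's proof also interpolates via $\bnu^N(\tau)=\E[\bODE(t-\tau,\bXN(\tau))]$, differentiates along the trajectory (Lemma~\ref{lemma:nu_derivative}), and splits the integrand into exactly your two pieces — a second-order Taylor remainder of the flow against the jumps, bounded by $\mathcal{O}(1/N)$ using the Lipschitz derivative of $\bODE$ and $\norm{\BFe^N_{k,s'}-\BFe^N_{k,s}}_{L_2}^2\le 2/N$ (Lemma~\ref{lemma:remainder_bound}), and the drift discrepancy $D_x\bODE\cdot(\bFG-\bFN)$, which yields the rate-discretization $\mathcal{O}(1/N)$ term plus $C_B\opnorm{G^N-G}$ (Lemma~\ref{lemma:drift_difference_bound}). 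The only cosmetic difference is that you scalarize against test functions $f$ with $\norm{f}_{L_2}\le 1$ and take a supremum at the end, whereas the paper works with the $L_2$-valued quantity directly.
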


The proof of Theorem \ref{thrm:approximation_accuracy} is postponed to Section \ref{proof:approximation_accuracy}.    
The constants $C_A, C_B$ of Equation \eqref{eq:thrm_approximation_accuracy} depend on the uniform bound of the rates, the time $t$ and the Lipschitz constants of the graphon.  In the subsequent Corollaries \ref{corollary:het_approx_results} and \ref{corollary:graphon_results} we will see, that if the $G^N$ is generated by one of the methods illustrated in Section \ref{definition:graph_sampling}, precise bounds on the distance $\opnorm{G^N -G}$ can be obtained. To illustrate our results and underline that the constants are small in practice, we provide examples in Section \ref{sec:numerical_experiments}. 
We point out that Theorem \ref{thrm:approximation_accuracy} is applicable for a wide range of construction methods for $G^N$. The subsequent corollaries illustrate cases of stochastic and deterministic sampling methods.  We emphasize, however, that any method that allows the construction of densely connected graphs, for which bounds of $\opnorm{G^N -G}$ are attainable, is viable. At last, we want to point out that by using the same framework, it seems feasible to extend our results to interactions of triplets or higher order interactions seems feasible. Yet, due to our generic choice of transition rates, this would be linked to increasingly heavy notations for the dynamics while only giving little more insight into the accuracy of the graphon mean field method. 

\subsection{Case Specific Bounds for \texorpdfstring{$\opnorm{G^N- G}_{L_2}$}{|||GN - G|||}}

The subsequent corollaries, give specific bounds for the case that $G^N$ was generated as described in Section \ref{definition:graph_sampling}. In the first case, if $G^N$ is obtained though discretization of $G$, Corollary \ref{corollary:het_approx_results} shows that the accuracy is of order $\mathcal{O}(1/N)$. Our second Corollary \ref{corollary:graphon_results} specifies big-$\mathcal{O}$ convergence rates if the graph $G^N$ is sampled stochastically from the graphon. In this case, the accuracy is with high probability of order $\mathcal{O}(\sqrt{\frac{\log{N}}{N}})$.

\subsubsection{Case 1: Graphon Discretization}

The corollary give accuracy bounds for the approximation in the case that the graph $G^N$ is obtained as a discretized version of $G$. The result gives bounds on the difference between the distributions of the particles in the stochastic system and the approximate values obtained through the graphon mean field approximation. 

\begin{corollary}[Dense Heterogeneous System] \label{corollary:het_approx_results}
Assume \ref{assumption:finite_states} - \ref{assumption:bound_lipschitz_transitions} and let $\bODE$ and $\bXN$ be defined as in Theorem \ref{thrm:approximation_accuracy}. Let $k \in[N], s\in\calS$ and $t\geq0$ be arbitrary but fixed. If $G^N$ is generated by the deterministic sampling method of \ref{definition:graph_sampling}, i.e., a discretization of the graphon $G$, it holds that
\begin{align}
\norm{ \E[\bXN(t) \mid \bXN(0), G^N ] - \bODE(t,\bXN(0)) }_{L_2} & \le \frac{C_A+C_B C_{G^N}}{N}
\end{align}
\end{corollary}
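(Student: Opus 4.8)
The plan is to obtain Corollary~\ref{corollary:het_approx_results} as an immediate specialisation of Theorem~\ref{thrm:approximation_accuracy}. All that is needed is a bound of the form $\opnorm{G^N-G}_{L_2}\le C_{G^N}/N$ for the deterministic discretization $G^N_{kl}=G(k/N,l/N)$: substituting it into \eqref{eq:thrm_approximation_accuracy} yields $\norm{\E[\bXN(t)\mid\bXN(0),G^N]-\bODE(t,\bXN(0))}_{L_2}\le \frac{C_A}{N}+\frac{C_B C_{G^N}}{N}=\frac{C_A+C_B C_{G^N}}{N}$, which is the assertion. Thus the entire argument reduces to controlling the operator-norm distance between the step graphon attached to $G^N$ and the graphon $G$.

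To bound this distance I would first pass to the (larger) Hilbert--Schmidt norm, using that the $L_2$ operator norm of a kernel is dominated by its $L_2((0,1]^2)$ norm, i.e.\ $\opnorm{G^N-G}_{L_2}\le\big(\int_0^1\!\int_0^1 (G^N_{uv}-G_{uv})^2\,du\,dv\big)^{1/2}$, and then estimate the right-hand side cell by cell over the $N^2$ squares $Q_{kl}=((k-1)/N,k/N]\times((l-1)/N,l/N]$. For a square $Q_{kl}$ that lies entirely inside a single product piece $\mathcal A_i\times\mathcal A_j$ of the partition of Definition~\ref{def:piecewise_lipschitz_graphon}, the piecewise-Lipschitz property gives, for $(u,v)\in Q_{kl}$, $|G^N_{uv}-G_{uv}|=|G(k/N,l/N)-G_{uv}|\le L_G(|k/N-u|+|l/N-v|)\le 2L_G/N$, so the contribution of all such squares to the integral is at most $(2L_G/N)^2$. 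Folding in the remaining squares then produces $\opnorm{G^N-G}_{L_2}\le C_{G^N}/N$, with $C_{G^N}$ depending on $G$ only through $L_G$ and its partition (and on how the grid $\{k/N\}$ is placed relative to that partition).

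The part I expect to be the genuine obstacle is precisely the squares $Q_{kl}$ that straddle a partition point $a_i$ of $G$: there the sampled value $G(k/N,l/N)$ may lie in a different Lipschitz piece than the running point $(u,v)$, and only the crude bound $|G^N_{uv}-G_{uv}|\le\norm{G}_\infty\le 1$ is then available. Controlling these cells requires the partition structure: there are at most $K_G$ ``boundary rows'' and $K_G$ ``boundary columns'' of cells, and inside each of them only a sub-rectangle of width $O(1/N)$ actually crosses the partition line, the rest still obeying the $2L_G/N$ estimate. Making this accounting rigorous is the only non-routine step. If one additionally wants $C_{G^N}$ to be bounded uniformly in $N$ -- so that the bound is truly $\mathcal{O}(1/N)$ -- it is natural to require the discretization grid to be compatible with the partition (which holds automatically when $G$ is globally Lipschitz, i.e.\ $K_G=1$ with no straddling cells, or when the $a_i$ are multiples of $1/N$); this should be recorded as a hypothesis, and it is satisfied by the graphon used in the bike-sharing example of Section~\ref{sec:numerical_experiments}. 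Everything else is the elementary cell-by-cell estimate together with the already-proved Theorem~\ref{thrm:approximation_accuracy}.
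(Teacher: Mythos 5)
Your route is the same as the paper's: specialize Theorem~\ref{thrm:approximation_accuracy} and then bound $\opnorm{G^N-G}_{L_2}$ for the discretized graph by passing to the Hilbert--Schmidt norm and estimating cell by cell, with the $2L_G/N$ Lipschitz bound on cells interior to a product piece $\mathcal{A}_i\times\mathcal{A}_j$ and a crude $O(1)$ bound on the $O(K_G N)$ cells that straddle a partition line. The paper delegates exactly this computation to Lemma~\ref{lemma:lipschtiz_discretization_error}, which in turn follows the proof of Theorem~1 of Avella-Medina et al.; so in substance your argument and the paper's coincide.

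The point you flag as the ``genuine obstacle'' is real, and in fact it is a gap in the paper's own one-line proof rather than in yours. The straddling cells have total area of order $K_G/N$ and contribute $O(K_G/N)$ to $\int_0^1\int_0^1(G_{uv}-G^N_{uv})^2\,du\,dv$; hence the Hilbert--Schmidt norm, and a fortiori the operator norm that enters Theorem~\ref{thrm:approximation_accuracy}, is only $O(\sqrt{K_G/N})$ when the breakpoints $a_i$ are not aligned with the grid $\{k/N\}$. Lemma~\ref{lemma:lipschtiz_discretization_error} states a bound of order $K/N$, but (as its own first Cauchy--Schwarz step shows) the quantity it actually controls is the \emph{squared} $L_2$ distance, not the operator norm itself; taking the square root reintroduces the $\sqrt{K_G/N}$ term, consistent with the $\sqrt{K_G/N}$ that explicitly appears in $\psi_G(N)$ of Corollary~\ref{corollary:graphon_results} for the very same boundary-cell effect. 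Consequently the stated $O(1/N)$ rate with an $N$-independent $C_{G^N}$ holds only under the additional hypothesis you propose --- a globally Lipschitz graphon, or partition points that are multiples of $1/N$ (both satisfied in the bike-sharing example) --- and your decision to record that hypothesis is the correct repair. Apart from making that hypothesis explicit, nothing further is missing from your argument.
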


The proof is postponed to Section \ref{proof:het_approx_results}. The constants $C_A, C_B$ are as in Theorem \ref{thrm:approximation_accuracy}. The additional constant $C_{G^N}$ relates to the discretization error of the deterministic sampling method. For this case, it is noteworthy that the accuracy of the approximation aligns with the results one obtains for the homogeneous setting as described in \cite{gastExpectedValuesEstimated2017,gastRefinedMeanField2017} while allowing for heterogeneous connectivity and rates among the population. 


\subsubsection{Case 2: Random Graph}

Our second corollary provides accuracy bounds for interacting particle systems on dense random graphs. By the definition of the graph sampling methods, see Section \ref{definition:graph_sampling}, with high probability, the number of connected neighbors for each particle is of order $N$. This ensures that for large enough systems that the neighborhood of each node serves as a local representation of the overall system state. This leads to the following result: 

\begin{corollary}[Graphon System Approximation] \label{corollary:graphon_results}
Assume the conditions \ref{assumption:finite_states} - \ref{assumption:bound_lipschitz_transitions} as in Theorem \ref{thrm:approximation_accuracy} for a symmetric graphon $G$. Let $\bXN(t) = (\XN_{k,s})(t)_{k\in[N], s\in \calS}$ be a stochastic particle system of size $N$ with $G^N$ obtained through the stochastic sampling method as defined in \ref{definition:graph_sampling}. Let $t\geq 0$ and $k\in[N], s\in \calS$ be arbitrary but fixed. Then, with probability at least $1-2/N$ and for `large enough' $N$, as defined in Definition \ref{def:large_enough_N}, the graph $G^N$ is sampled such that
\begin{align}
\norm{ \E[\bXN(t) \mid \bXN(0), G^N ] - \bODE(t,\bXN(0)) }_{L_2} \leq \frac{C_A}{N} + C_B \psi_{G}(N)
\end{align}
where 
$\psi_{G}(N) := \sqrt{\frac{8 \log{(N)}}{N}} + 2\sqrt{\frac{(L_G^2 - K_G^2)}{N^2} +\frac{K_G}{N}}$
with $L_G$ being the Lipschitz constant of the graphon $G$ and $K_G$ the size of the partition as defined in \ref{def:piecewise_lipschitz_graphon}. 
\end{corollary}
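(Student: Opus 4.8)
The plan is to read this off as a direct consequence of Theorem~\ref{thrm:approximation_accuracy} and the concentration estimate of Lemma~\ref{lemma:graphon_distance}; the only substantive points are to feed Lemma~\ref{lemma:graphon_distance} the right confidence parameter and to verify that the hypotheses of both results hold in the stochastic sampling setting.

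First I would note that Theorem~\ref{thrm:approximation_accuracy} applies to \emph{every} realization of the random graph $G^N$. Indeed, the stochastic sampling of Section~\ref{definition:graph_sampling} produces a symmetric matrix with entries in $\{0,1\}\subset[0,1]$, $G$ is piecewise Lipschitz by \ref{assumption:piecewise_lipschitz_graphon}, and \ref{assumption:finite_states} and \ref{assumption:bound_lipschitz_transitions} are assumed; hence, conditionally on $\bXN(0)$ and on a fixed instance $G^N$,
\begin{align*}
\norm{ \E[\bXN(t) \mid \bXN(0), G^N ] - \bODE(t,\bXN(0)) }_{L_2} \le \frac{C_A}{N} + C_B \opnorm{G^N-G}_{L_2},
\end{align*}
where $C_A,C_B$ depend only on $t$, the uniform bound on the rates and the Lipschitz data of $G$, and in particular are deterministic and do not depend on the drawn instance $G^N$. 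Next I would apply Lemma~\ref{lemma:graphon_distance} with $\delta=\delta_N:=2/N$. Since $G$ is symmetric and piecewise Lipschitz, for $N$ large enough in the sense of Definition~\ref{def:large_enough_N} this gives, with probability at least $1-\delta_N=1-2/N$,
\begin{align*}
\opnorm{G-G^N}_{L_2} \le \psi_{\delta_N,G}(N) = \sqrt{\frac{4\log(2N/\delta_N)}{N}} + 2\sqrt{\frac{L_G^2-K_G^2}{N^2}+\frac{K_G}{N}}.
\end{align*}
Substituting $\delta_N=2/N$ yields $2N/\delta_N=N^2$, so $4\log(2N/\delta_N)=8\log N$, and the right-hand side is exactly $\psi_G(N)$ as defined in the statement.

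Finally I would intersect the two facts: on the event $\{\opnorm{G-G^N}_{L_2}\le\psi_G(N)\}$, which has probability at least $1-2/N$, the conditional bound of the first step becomes
\begin{align*}
\norm{ \E[\bXN(t) \mid \bXN(0), G^N ] - \bODE(t,\bXN(0)) }_{L_2} \le \frac{C_A}{N} + C_B\,\psi_G(N),
\end{align*}
which is the claimed inequality.

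There is no genuine analytic difficulty in this corollary; the points that need care are (i) checking that the constants $C_A,C_B$ of Theorem~\ref{thrm:approximation_accuracy} are independent of the random instance $G^N$, so that conditioning on the high-probability event of Lemma~\ref{lemma:graphon_distance} is legitimate, and (ii) the bookkeeping that turns $\psi_{\delta_N,G}(N)$ with $\delta_N=2/N$ into $\psi_G(N)$, together with verifying that $\delta_N=2/N$ meets the ``large enough $N$'' conditions of Definition~\ref{def:large_enough_N} — namely $\delta_N<e^{-1}$ for $N\ge 6$, and that the remaining two conditions hold for all sufficiently large $N$ because their left-hand sides vanish as $N\to\infty$ while $\sup_{u}\int_0^1 G_{uv}\,dv>0$ since $G$ takes values in $(0,1]$.
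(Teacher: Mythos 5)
Your proposal is correct and follows the same route as the paper: apply Theorem~\ref{thrm:approximation_accuracy} conditionally on the realized graph, then invoke Lemma~\ref{lemma:graphon_distance} with $\delta=2/N$ and observe that $4\log(2N/\delta)=4\log(N^2)=8\log N$ turns $\psi_{\delta,G}(N)$ into $\psi_G(N)$. Your added remarks on the instance-independence of $C_A,C_B$ and on verifying the ``large enough $N$'' conditions are sensible bookkeeping that the paper leaves implicit.
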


The proof of the corollary is postponed to Section~\ref{proof:graphon_results}.

 
\section{Numerical Experiments} \label{sec:numerical_experiments}

In this section, we present two examples which support the statements of our theoretical results and illustrate the applicability of the framework. For the first example, we look at a load balancing model with communication restrictions of the servers imposed by a graph. In this example, the focus is on the stochastically sampled graph, which imposes heterogeneous rates due to connectedness of the servers. For the dynamics of the system, we see each node as a dispatcher / server pair applying the JSQ($2$) policy with respect to itself and connected servers whenever a job arrives. For the second example, we consider a heterogeneous bike-sharing system. Here, the focus lies on the heterogeneity of the popularity of the stations, which affects the flow of bikes through the system.


\subsection{Load Balancing Example} \label{subsec:load_balancing_example}

\subsubsection{Model}

The considered load balancing model is a jump process in the Markovian setting. We consider a system where the servers-dispatcher pairs are connected through a graph structure where each server-dispatcher is represented by a node. All servers have a finite maximal queue length $K_L\in\N$. The connections between the server-dispatcher pairs are denoted by $G^N_{kl}$ and are sampled according to Section \ref{definition:graph_sampling} using the stochastic sampling method. The graphon $G$ used to sample the edges is the same as Figure \ref{fig:lb_graphon_illustration}. Based on the described connectivity structure, jobs arrive to a server-dispatch pair following a Poisson arrival process with rate $\lambda_L > 0$. Arriving jobs are distributed according to the JSQ($2$) policy idea, i.e., the dispatcher considers its own server state as well as another randomly sampled but connected server and forwards the job to the server with the lesser load. In the case that both servers have the same queue length, the job is assigned randomly among the two. In case both servers have a full queue, the job is discarded. The service time of a job is exponentially distributed with mean $\mu_L$ and jobs are handled in first come, first served order. For a system of size $N$ with graph instance $G^N$ and state $\bXN = (\XN_{k,s})_{k\in[N], s=0..K_L}$ the transitions of the system are
\begin{align}
& \bXN \to \bXN + \BFe_{k,s+1}^N - \BFe_{k,s}^N \\
& \quad \text{ at rate } \lambda_L \XN_{k,s}\sum_{l\in[N]} (\frac{G^N_{kl}}{d\toN(k)} + \frac{G^N_{kl}}{d\toN(l)}) \Bigl(  \frac{1}{2} X_{l,s} + \sum_{s_l \ge s+1} X_{l,s_l}  \Bigr)\mathds{1}_{s < K_L} \label{graphon:eq:job_arrival} \\
& \bXN \to \bXN + \BFe_{k,s-1}^N - \BFe_{k,s}^N \quad \text{ at rate } \XN_{k,s} \mu_L \mathds{1}_{s>0} \label{graphon:eq:job_completion}
\end{align} 
where $\BFe_{k,s}$ is a $N\times \abs{\calS}$ matrix having a one at the $(k,s)$ entry and zero values everywhere else and $d\toN(k)$ is the degree of node $k$. Equation \eqref{graphon:eq:job_completion} corresponds to the completion of a job by server~$k$ when the queue is of size $1 \leq s \leq K_L$. The second type of transitions, equation \eqref{graphon:eq:job_arrival}, corresponds to adding a job to  $k$ having $0 \leq s \leq K_L-1$ jobs in the buffer. In this case, the queue size is increased by one from $s$ to $s+1$. To explain the transition rate we see that the servers~$k$ can be selected in two ways. By selecting $k$ or another connected server $l$ first and the other second. In the case that both queues have equal length, the chance that the job to arrives at server~$k$ is $1/2$. If both buffers are full, the job is discarded. In the case that a node associated to server $l$ is isolated, i.e., has no edges, we define $\frac{G^N_{kl}}{d\toN(l)}$ to be zero.

\subsubsection{Drift and Graphon Mean Field Approximation} 
For the deterministic drif, we replace the values of $\frac{G^N}{d\toN(k)}$ by the ones of the graphon $\frac{G}{d(v)}$ with $d(v) := \int_0^1 G_{v,\nu} d\nu$ and the sums over particles by an integral over $(0,1]$. For a given state $\bx = (x_s)_{s=0,\dots,K_L}$ with $x_s \in L_2(0,1]$, the drift evaluated at $(u,s) \in(0,1]\times \{0,\dots,K_L\}$ is defined by
\begin{align*}
\FG_{u,s}(\bx) = x_{u,s} \mathds{1}_{s < K_L} \int_0^1 \lambda_L \left(\frac{G_{u,v}}{d(u)} + \frac{G_{u,v}}{d(v)}\right) \bigl( \frac{1}{2}x_{v,s} + \sum_{s_v\ge s+1} x_{v,s_v} \bigr)dv \  - x_{u,s} \mu_L \mathds{1}_{s>0}.
\end{align*}
The graphon mean field approximation is then defined as in Equation \eqref{eq:definition_ode}.

\subsubsection{Derivation of Accuracy Bounds}

While edges between the server-dispatcher pair are sampled according to the stochastic sampling method of Section \ref{definition:graph_sampling}, the dependence on of the rates on the node degree, prevents a direct application of the results of Corollary \ref{corollary:graphon_results}. In particular, to apply Corrollary \ref{corollary:graphon_results} it is assumed that the graph edges are scaled by a factor of $1/N$ instead of the node degree. To resolve this issue and obtain accuracy bounds similar to the one given by the Corollary, we start by defining 
$\tilde{G}^N_{kl} := (\frac{G^N_{kl}}{d\toN(k)} + \frac{G^N_{kl}}{d\toN(l)})\frac{N}{16}$, $r^{N,\text{pair}}_{k,l,s\to s+1,s_l} = \lambda_L ( \frac{1}{2} \mathds{1}_{s_l = s} + \mathds{1}_{s_l > s}) \mathds{1}_{s < K_L}$, and
$r^{N,\text{uni}}_{k,s\to s-1} = \mu_L \mathds{1}_{s>0}$
with $r^{N,\text{pair}}$ being the pairwise transitions and $r^{N,\text{uni}}$ the uniateral transitions for $s, s', s_l \in \{0, ..., K_L \}$. This allows to obtain the drift of the stochastic system similar to the one introduced in Equation \eqref{eq:drift_N} namely
\begin{align*}
\FN_{k,s}(\bX) =  X_{k,s} \sum_{s_l \ge s} \sum_{l\in[N]} 16 \frac{\tilde{G}^N_{kl}}{N} r^{N,\text{pair}}_{k,l,s\to s+1,s_l} X_{l,s'} - X_{k,s} r^{N,\text{uni}}_{k,s\to s-1}.
\end{align*}
We proceed similarly for the drift of the graphon mean field approximation by defining
$\tilde{G}_{uv} := (\frac{G_{u,v}}{d(u)} + \frac{G_{u,v}}{d(v)})\frac{1}{16}$, $r^{\text{pair}}_{k,l,s\to s+1,s_l} :=  \lambda_L ( \frac{1}{2} \mathds{1}_{s_l = s} + \mathds{1}_{s_l > s}) \mathds{1}_{s < K_L}$ and $r^{N,\text{uni}}_{k,s\to s-1} := \mu_L \mathds{1}_{s>0}$ to rewrite 
\begin{align*}
\FG_{u,s}(\bx) = x_{u,s}  \int_0^1 \sum_{s_v \geq s} 16 \tilde{G}_{uv}  r^{\text{pair}}_{k,l,s\to s+1,s_v} x_{v,s_v} dv  - x_{u,s} r^{N,\text{uni}}_{k,s\to s-1}.
\end{align*}
Note that for the above reformulations, the result of our main Theorem \ref{thrm:approximation_accuracy} is still applicable and yields constants $C_A, C_B\geq 0$ such that  $\norm{\E[\bXN(t)\mid G^N] - \bODE(t)}_{L_2} \leq \frac{C_A}{N} + C_B \opnorm{\tilde{G}^N - \tilde{G}}_{L_2}$. Using the definition of the $\tilde{G}^N, \tilde{G}$, we obtain $\opnorm{\tilde{G}^N - \tilde{G}}_{L_2} \leq \frac{1}{16}\Bigl(\norm{G^N - G}_{L_2}  \frac{N}{\min_k d^N(k)} + \sup_{u\in(0,1]} \abs{ \sum_{k=1}^N\frac{N}{d^N(k)}\mathbf{1}_{u\in(k-1/N,k/N]} - \frac{1}{d(u)}} \Bigr)$. For $\opnorm{G^N - G}_{L_2}$, we can use the bound as in Corollary \ref{corollary:graphon_results} which is of order $\mathcal{O}(\sqrt{\frac{\log N}{N}})$ with probability at least $1-\frac{2}{N}$ and large enough $N$ (as in Definition \ref{def:large_enough_N}). 
To obtain similar bounds for the node degree, the multiplicative Chernoff bound can be used, i.e., 
$\Prob\Bigl(\frac{N}{d^N(k)} \geq \frac{N}{(1 - \gamma)\E[d^N(k)]}\Bigr) = \Prob \Bigl(d^N(k) \leq (1-\gamma)\E[d^N(k)] \Bigr) \leq \exp(- \frac{\E[d^n(k)]\gamma^2}{3})$. Taking $\gamma = \sqrt{\frac{3\log{N}}{\E[d^N(k)]}}$ yields $\Prob\Bigl(\frac{N}{d^N(k)} \geq \frac{N}{(1 - \gamma)\E[d^N(k)]}\Bigr)\leq \frac{2}{N}$.
By the triangle inequality we have $\abs{ \sum_{k=1}^N\frac{N}{d^N(k)}\mathbf{1}_{u\in(k-1/N,k/N]} - \frac{1}{d(u)}} \leq \abs{ \sum_{k=1}^N(\frac{N}{d^N(k)} - \E[\frac{N}{d^N(k)}])\mathbf{1}_{u\in(k-1/N,k/N]}} + \abs{\sum_{k=1}^N \E[\frac{N}{d^N(k)}]\mathbf{1}_{u\in(k-1/N,k/N]} - \frac{1}{d(u)}}$. We apply Chernoff bound the first summand to show that the difference concentrates around zero, i.e., $\Prob\Bigl(\frac{N}{d^N(k)} \leq \frac{N}{(1 - \gamma)\E[d^N(k)]}\Bigr) \geq 1 - 2/N$ and therefore $\frac{N}{d^N(k)} - \frac{N}{\E[d^N(k)]} \leq N ( \frac{1}{(1-\gamma)\E[d^N(k)]} -  \frac{1}{\E[d^N(k)]}) 
= \mathcal{O}(\sqrt{\frac{\log{N}}{N}})$ with probability at least $1 - 2/N$. The case $\frac{N}{d^N(k)} \leq \frac{N}{(1 - \gamma)\E[d^N(k)]}$ follows using the same arguments. For the second summand we use the statement of Lemma \ref{lemma:lipschtiz_discretization_error} to obtain an error bound of order $\mathcal{O}(1/N)$. This shows that for large enough $N$, $\opnorm{\tilde{G}^N - \tilde{G}}_{L_2}$ is of order $\mathcal{O}(\sqrt{\frac{\log N}{N}})$.


\subsubsection{Implementation}

For the simulation, we set the parameters of the load balancing system as follows:
The arrival rate is set to $\lambda_L = 1$, the server rate to $\mu_L = 1.1$ for all servers, and the maximal server capacity is $K_L = 10$. For each system size $N$, we sample a graph that remains fixed for all simulations and obtain the sample mean by averaging over $2000$ sampled trajectories for all system sizes. To compute the solution of the graphon mean field approximation, we discretize the drift by a simple step discretization. To be more precise, let $\gamma \in \N$ be the discretization parameter and $U^\gamma = \{ U_i^\gamma\}_{i=1..\gamma}$ be the even partition of the interval $(0,1]$ with $U_i^\gamma := (i-1/\gamma, i / \gamma]$. We define by $F^\gamma_B(\bx)_{u,s}:= \sum_{i=1}^\gamma \mathds{1}_{U_i^\gamma}(u) F_B(\bx)_{i/\gamma,s}$ the values of the discretized version of $F_B$. Throughout the numerical experiments, we set $\gamma = 100$. We choose this simple discretization method as it provides good approximation results and low computation times, i.e., for a rudimentary implementation using a NumPy ode solver, we are able to solve the discretization in a few seconds.\footnote{For intricate intensity and graphon functions it might be necessary to refine and tune the discretization approach.} The Figures \ref{fig:single_items_lbs} and \ref{fig:queue_at_least_figure} show the results of our numerical experiments. The first Figure \ref{fig:single_items_lbs} shows the approximation accuracy for a single server and different system sizes. We see that already for $N=40$ the sample mean is very close to the approximation value, which supports the statement of Corollary \ref{corollary:graphon_results}. In the second Figure \ref{fig:queue_at_least_figure}, we compare the values for the fraction of servers with at least $s$ jobs for $s=1,...,4$ as described in the figure caption. Remarkably here, even for small system sizes, the values obtained by the sample mean are close to the ones obtained by the approximation. As we observe the by Corollary \ref{corollary:graphon_results} suggested behavior for single particles inb Figure \ref{fig:single_items_lbs}, the increased accuracy for $N=20, 30$ is likely caused by an averaging effect and a well-connected graph instance $G^N$.

\begin{figure}[htbp]
  \centering
  \includegraphics[width=\linewidth]{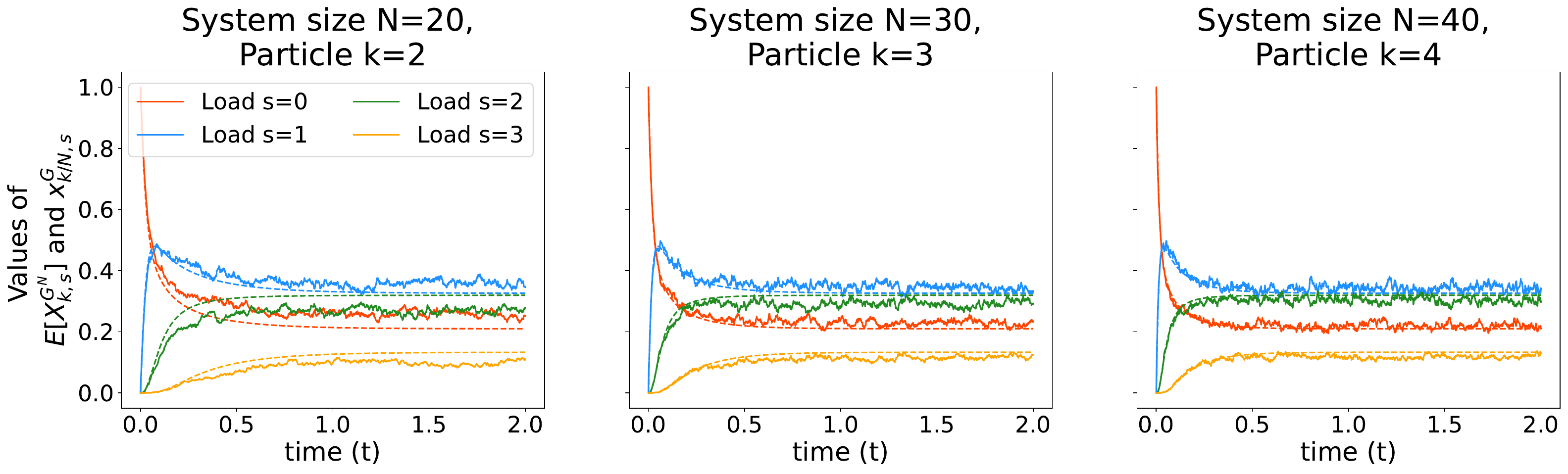}
  \caption{The figure shows the evolution of $\mathbb{P}(\XN_{k,s}(t)=1)=\E[\XN_{k,s}(t)]$, the probability for a server-dispatcher pair to have $s=0,...,3$ jobs for $t\in[0,2]$. In each plot the results for one system size $N=20,30,40$ are displayed. Throughout, the sample mean values are plotted against the values of the graphon mean field approximation.}
  \label{fig:single_items_lbs}
\end{figure}

\begin{figure}[htbp]
    \centering
    \includegraphics[width=\linewidth]{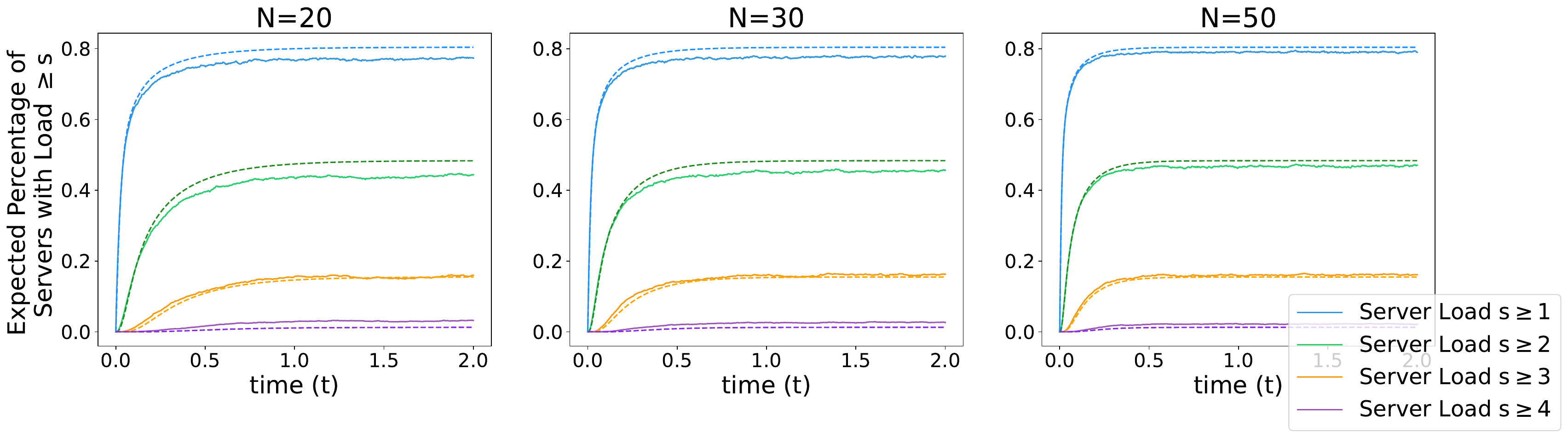}
    \title{Simulation and Approximation for the Load Balancing Example}
    \caption{The figure compares the values for the fraction of servers with at least $s$ jobs obtained by the sample mean and the approximation. The quantities are calculated as $\E[Q_s(t)] = \E [ \sum_{k\in[N]} \frac{1}{N}\sum_{s'\geq s} \XN_{k,s'}]$ for the sample mean and $q_s(t) =  \int_0^1 \sum_{s'\geq s} \ODE_{u,s'}du$ for the approximation.}
    \label{fig:queue_at_least_figure}
\end{figure}


\subsection{Heterogeneous Bike-Sharing System}

\label{sec:bike_example}

\subsubsection{Model}

We consider a bike-sharing model following the setup used in \cite{frickerIncentivesRedistributionHomogeneous2016,frickerMeanFieldAnalysis2012}. The model consists of $N\in\N$ bike stations, representing the particles in the system. Each station has finite capacity $K_B\in\N$. The system has a fleet of bikes of size $M:= \lfloor \alpha N \rfloor$ which, at time $t=0$, is evenly distributed amongst the stations, i.e., $\alpha$ bikes per station. The evolution of the system depends on the movement of bikes. We differ between the two cases, bikes in the system can either be stationary or in transit between stations. To align with the notations used in the theory part of the paper, we denote by $S_k^N(t), i\in[N]$ the number of bikes at station $i$ for time $t \geq 0$. For the system, heterogeneity comes from the varying popularity of stations. Hence, we denote by $p_B:(0,1] \mapsto \R_{\geq0}$ the popularity function. Based on the popularity function $p_B$, we define the graphon of the bike sharing system by $G_B(u,v) := p_B(v)/\int_0^1 p_B(\nu)d\nu$. For the system of size $N$, connectivity for the stations is obtained by deterministic sampling as described in Section \ref{definition:graph_sampling}, i.e., we discretize the graphon based on the system size $N$. For two stations $k,l\in[N]$ connectivity is therefore defined by $G^{N,B}_{kl}:= G_B(k/N, l/N) = p_B(l/N)/\int_0^1 p_B(\nu)d\nu$. Based on the connectivity between stations, it remains to define the dynamics of the system. For a given state $(S_1,...,S_N)\in[K_B]^N$ the bikes move between stations in the following way: 
\begin{itemize}
\item Customers arrive at a stations $k\in[N]$ with rate $\lambda_B$ leading to the transitions
\begin{align*}
S \to S - \BFe_{k}^N && \text{ at rate } && \lambda_B\mathds{1}_{S_k>0}. \phantom{\frac{p_B(k/N)}{\int_0^1 p_B(\nu)d\nu} (M - \sum_k S_k^N)}
\end{align*}
\item The travel time of bikes between stations is exponentially distributed with rate $\mu_B > 0$. Hence, the for station $k\in[N]$, the arrival rate of bikes is the product of the scaled popularity of station times the traveling bikes $(M-\sum_k S_k^N)$ weighted by the travel time, i.e., $\frac{p_B(k/N)}{\int_0^1 p_B(\nu)d\nu} \bigl( M - \sum_k S_k^N \bigr)\mu_B$. This implies the transition
\begin{align*}
S \to S + \BFe_{k}^N && \text{ at rate } && \frac{p_B(k/N)}{\int_0^1 p_B(\nu)d\nu} (M - \sum_k S_k^N)\mu_B \mathds{1}_{S_k<K_B}.
\end{align*}
\end{itemize}
For the above, the notation $\BFe_{k}^N$ refers to the unit vector of size $N$ having a one at entry $k$ and zeros everywhere else.

\subsubsection{Drift \& Limiting System}
To define the limiting system, we derive the drift implied by the above transitions and the defined graphon $G_B$. We start by considering the indicator state representation as outlined in Section \ref{sec:particle_system}, i.e., the drift is a vector of size $K+1$ of $L_2(0,1]$ functions. In contrast to the stochastic rates, for the drift, the sum over particles is replaced by an integral over and the values of $G^N$ replaced by the graphon values. For a state $\bx = (x_s)_{s=0,\dots,K}$ with $x_s \in L_2(0,1]$, the drift of the system at $(u,s) \in (0,1] \times \calS = \{0,\dots,K_B\}$ is defined by
\begin{align*}
\FG_{u,s}(\bx) := x_{u,s} \left( \frac{p_B(u)}{\int_0^1 p_B(\nu)d\nu} \bigl(M - \int_0^1 \sum_{s=0}^{K_B} x_{v,s}dv \bigr) \mu_B \mathds{1}_{s<K_B} - \lambda_B \mathds{1}_{s>0}\right).
\end{align*}
By definition of the system and particularly the graphon, it is immediate that the assumptions of Theorem \ref{thrm:approximation_accuracy} and Corollary \ref{corollary:het_approx_results} hold, therefore guaranteeing the accuracy of the approximation.

\subsubsection{Implementation \& Results}

For our simulations, we set the popularity function of the bike sharing system to $p_B(v) := 1 - 0.5v$. The travel rate and customer arrival rate are $\mu_B=1$ and $\lambda_B=1$ respectively. For the plots of Figure \ref{fig:plots_distribution_approximation_bikes_single} and Figure \ref{fig:ode_state_approximation} we calculate the sample mean by averaging over $7500$ simulations for each system size. For each plot in the first Figure \ref{fig:plots_distribution_approximation_bikes_single}, the mean field trajectory for single item and state is plotted against the sample mean. Here the horizontal axis represents time $t$ and the vertical axis the probability for the item to be in the state. In the second Figure \ref{fig:ode_state_approximation}, a comparison of the sample mean against the values of the approximation for fixed time and state is shown. As used for the theoretical results, the state of the stochastic system is represented as a step function with constant value $\E[\XN_{k,s}]$ on the intervals $(k-1/N,k/N]$ for $k=1..N$. In accordance with our theoretical results, the plot shows that with increasing system size the graphon mean field approximation becomes more accurate and captures the state distribution of the particles well. 
\begin{figure}[htbp]
    \centering
    \title{Simulation and Approximation for the Load Balancing Example}
    \begin{subfigure}[b]{\textwidth}
        \includegraphics[width=\linewidth]{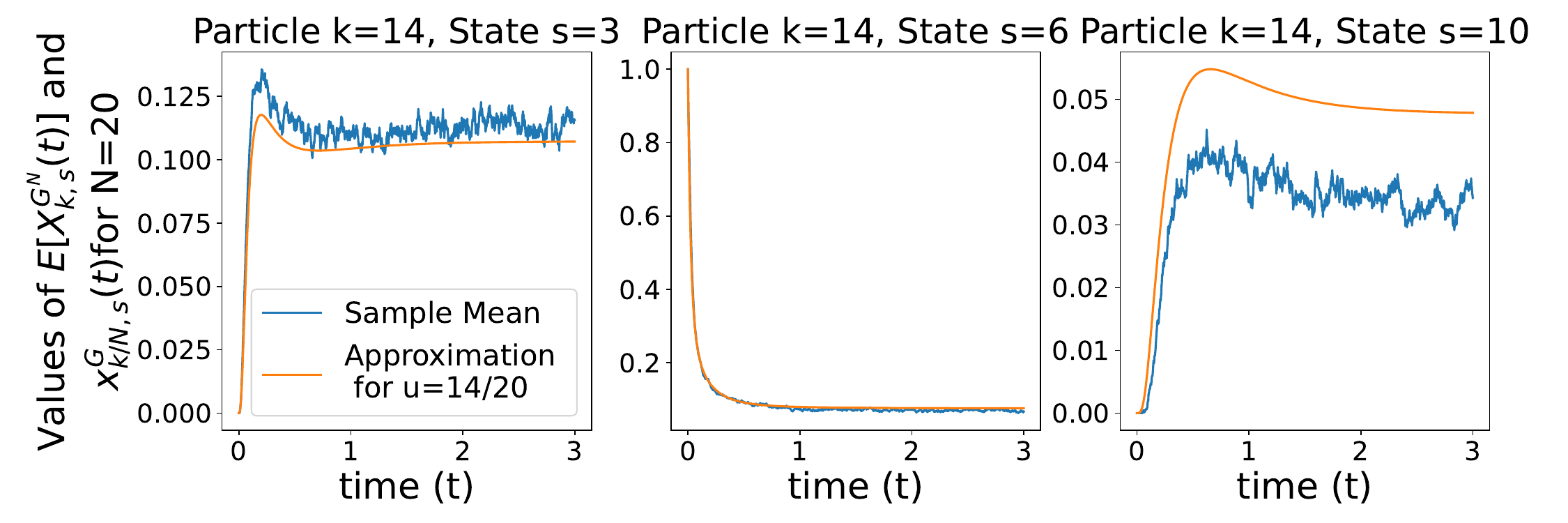}
    \end{subfigure}\\
    \begin{subfigure}[b]{\textwidth}
        \includegraphics[width=\linewidth]{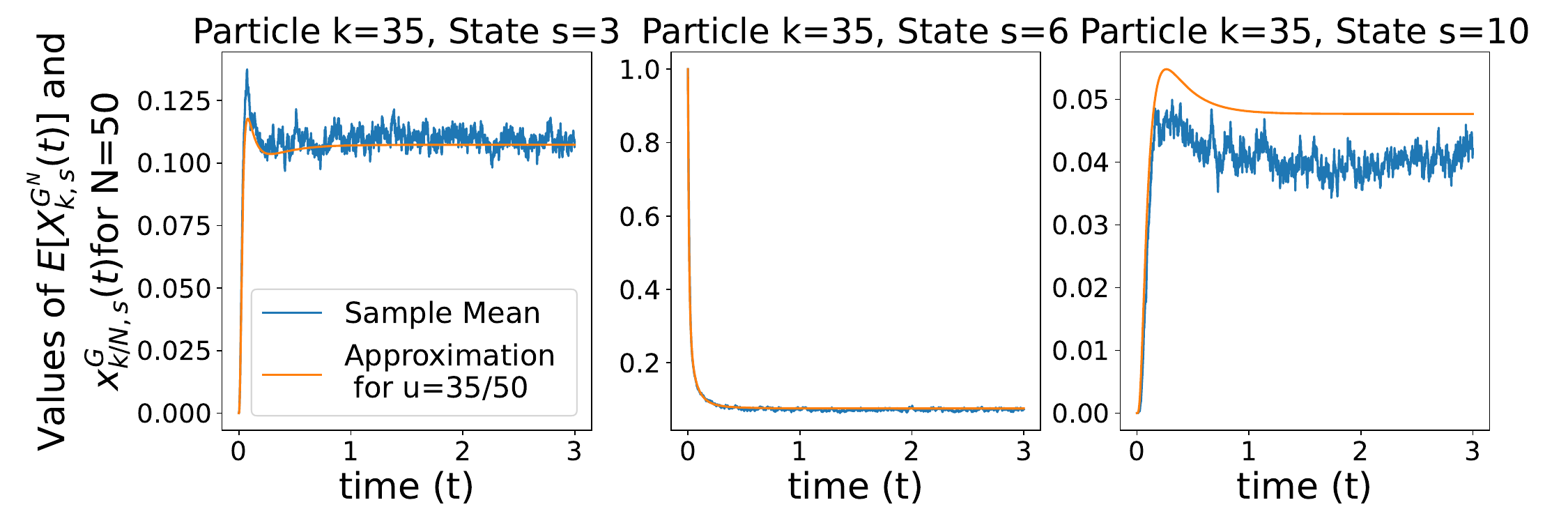}
    \end{subfigure}\\
    \caption{The plot shows the sample mean of $\E[\XN_{k,s}(t)]$ for $N=20, 50$ and the value of the graphon mean field approximation $\bODE_{k/N,s}(t)$ for $t\in[0,3]$. The plots are generated for the states $s=3,6,10$ and $k=14, 35$.}
    \label{fig:plots_distribution_approximation_bikes_single}
\end{figure}

\begin{figure}[htbp]
\includegraphics[width=1\linewidth]{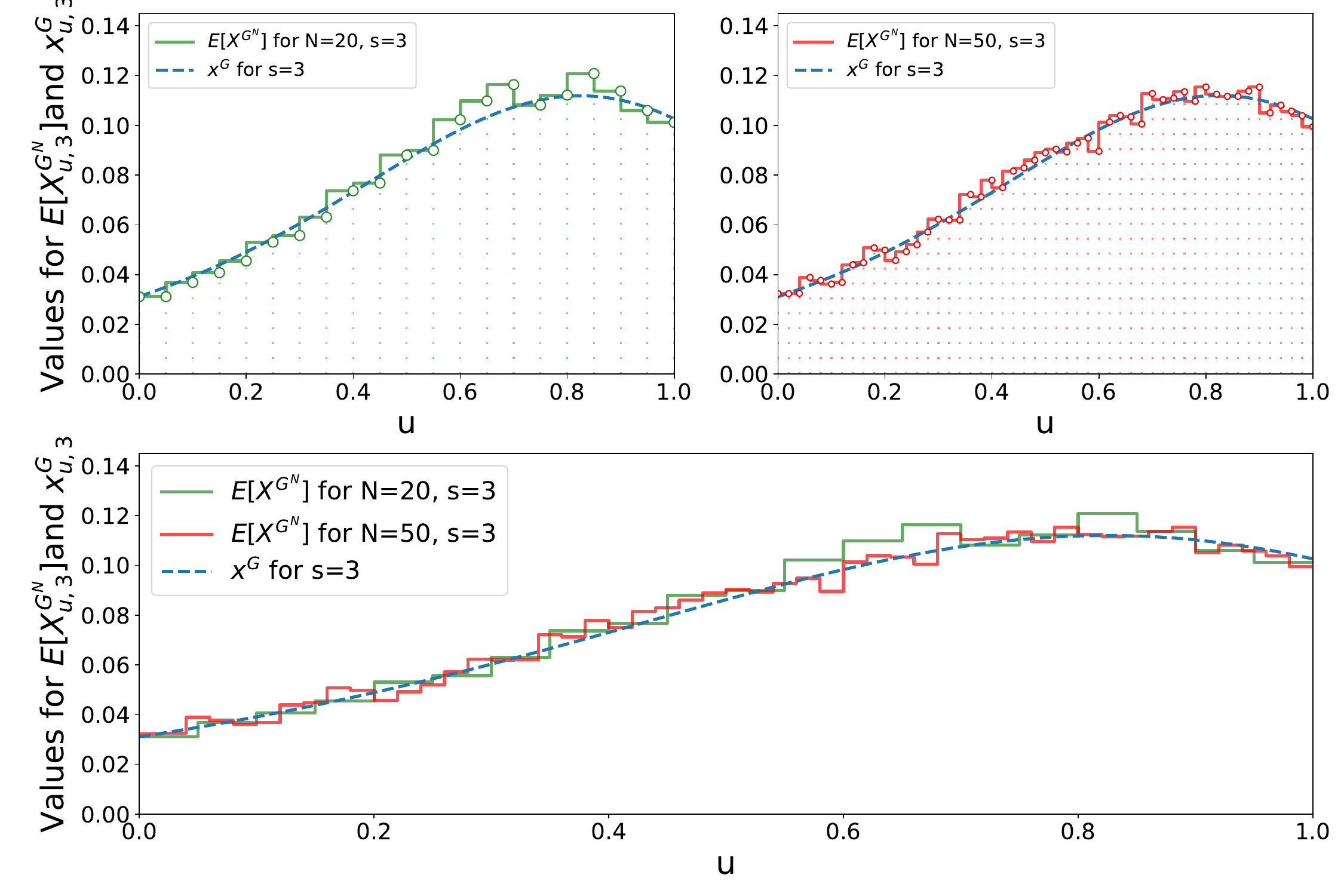}
\caption{For a fixed $t=2.5$ and state $s=3$, the plots compare the graphon mean field approximation $\bODE$ against sample means for systems of size $N=20,50$. As for the main theorem of our results, we represent the values of the stochastic system as step function with constant values on the intervals $(k-1/N,k/N], k\in[N]$. The figure shows that for increasing system size, the values of the stochastic system indeed approach the trajectory of the deterministic system. In the upper plots, the approximation is plotted against values of the sample mean for one system size $N=20, 50$ each. In the lower plot, both functions are overlain for better comparison.}
\label{fig:ode_state_approximation}
\end{figure}




\section{Proofs}
This section provides the proof of the main statements of this paper.



\subsection{Proof of Lemma~\ref{lemma:unique_solution}}
\label{proof:unique_solution}

\begin{proof}{Proof.} We prove the uniqueness and continuous differentiability of the differential equation \eqref{eq:definition_ode} as well as the Lipschitz properties of the drift. The proof is the consequence of two lemmas: 
\begin{itemize}
    \item We show in Lemma \ref{lemma:drift_lipschitz} below that the drift is locally Lipschitz continuous in $L_2$. By~\cite{driverAnalysisToolsApplications2003}, this implies the existence of a local solution and the uniqueness of this solution. 
    \item In Lemma~\ref{lemma:directional_derivative_drift} we show that the directional derivatives of $\FG$ are well-defined and Lipschitz continuous. This property ensures that the $\bODE$ is also continuously differentiable (for a proof of this property in general Banach spaces see for example \cite{driverAnalysisToolsApplications2003}). \hfill \Halmos
\end{itemize}

\end{proof}

\begin{lemma}[Local Lipschitz Continuity of the Drift] \label{lemma:drift_lipschitz}
    Let $L_{\bFG} = 2(C_R \abs{\calS} + 2 C_R \abs{\calS}^2 \opnorm{G})$ where $C_R$ is the bound on the rate functions, \opnorm{G} the operator norm of the graphon $G$ and $\abs{\calS}$ the size of the finite state space. 
    Then for all $\bx_{.}=(x_{.,s})_{s\in\calS}, \by_{.}=(y_{.,s})_{s\in\calS}$ with $x_{.,s}, y_{.,s} \in L_2(0,1]$ and $\norm{x}, \norm{y} \leq 1$ with  $\abs{x_s(u)},\abs{y_{u,s}} \leq 1$ almost everywhere\footnote{Here, \textit{almost everywhere} means that the property holds except on a subset of $(0,1]$ with measure zero.}, we have: 
    \begin{align*}
        \norm{\bFG(\bx) - \bFG(\by)}_{L_2} \leq L_{\bFG} \norm{\bx - \by}_{L^2}.
    \end{align*}
\end{lemma}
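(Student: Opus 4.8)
The plan is to split the drift into its \emph{unilateral} part, which is linear in $\bx$, and its \emph{pairwise} part, which is bilinear in $\bx$, and to bound the contribution of each to $\norm{\bFG(\bx)-\bFG(\by)}_{L_2}$ separately before combining them with the triangle inequality. Writing $\bFG_{u,s}(\bx)=\bR^{\text{uni}}_{u,s}\bx_u+\bx_u^T\int_0^1\bR^{\text{pair}}_{u,v,s}G_{u,v}\bx_v\,dv$, the unilateral difference is simply $\bR^{\text{uni}}_{u,s}(\bx_u-\by_u)$, while for the pairwise part I use the elementary bilinearity identity $ab-a'b'=a(b-b')+(a-a')b'$ with $a=\bx_u$ and $b=\int_0^1\bR^{\text{pair}}_{u,v,s}G_{u,v}\bx_v\,dv$, so that the pairwise difference splits into $\bx_u^T\int_0^1\bR^{\text{pair}}_{u,v,s}G_{u,v}(\bx_v-\by_v)\,dv$ and $(\bx_u-\by_u)^T\int_0^1\bR^{\text{pair}}_{u,v,s}G_{u,v}\by_v\,dv$. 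Thus there are three terms to estimate.

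For the unilateral term, every entry of the row vector $\bR^{\text{uni}}_{u,s}$ is bounded in absolute value by $C_R$ (by at most $\abs{\calS}C_R$ on the diagonal, being a sum of $\abs{\calS}$ rates) thanks to assumption \ref{assumption:bound_lipschitz_transitions}. A Cauchy--Schwarz step then gives $\abs{\bR^{\text{uni}}_{u,s}(\bx_u-\by_u)}\le c\,\abs{\calS}C_R\,\norm{\bx_u-\by_u}_2$ for a numerical constant $c$; squaring, summing over $s\in\calS$, and integrating over $u\in(0,1]$ converts $\sum_{s}\int_0^1\norm{\bx_u-\by_u}_2^2\,du$ into $\abs{\calS}\,\norm{\bx-\by}_{L_2}^2$, yielding a bound of the form $(\text{const})\cdot C_R\abs{\calS}\,\norm{\bx-\by}_{L_2}$.

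The pairwise part is the main step, and it is the only place where the operator norm and the local restriction enter. For the first pairwise term, expand $\bx_u^T\int_0^1\bR^{\text{pair}}_{u,v,s}G_{u,v}(\bx_v-\by_v)\,dv=\sum_{s',s_v}x_{u,s'}(\bR^{\text{pair}}_{u,v,s})_{s',s_v}\int_0^1 G_{u,v}(x_{v,s_v}-y_{v,s_v})\,dv$, bound $\abs{x_{u,s'}}\le 1$ almost everywhere and the rate entries by $C_R$, and reduce everything to estimating, for each fixed $s_v$, the quantity $\int_0^1\bigl(\int_0^1 G_{u,v}(x_{v,s_v}-y_{v,s_v})\,dv\bigr)^2\,du$. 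By the very definition of $\opnorm{G}$ this is at most $\opnorm{G}^2\norm{x_{\cdot,s_v}-y_{\cdot,s_v}}_{L_2(0,1]}^2$. Applying Cauchy--Schwarz over the finite sums in $s',s_v$ (and then $s$) and summing these $L_2$ estimates over $s_v$ reassembles $\norm{\bx-\by}_{L_2}^2$, giving a bound of the form $(\text{const})\cdot C_R\abs{\calS}^2\opnorm{G}\,\norm{\bx-\by}_{L_2}$. The second pairwise term is treated identically, this time using $\norm{\by}_{L_2}\le 1$ together with $\abs{y_{v,s_v}}\le 1$ to absorb the factor $\by_v$. This is exactly where the hypotheses $\norm{\bx},\norm{\by}\le 1$ with almost-everywhere pointwise bounds are needed: the bilinear term is only \emph{locally} Lipschitz, and the estimate would fail without restricting to this bounded set.

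Collecting the three pieces by the triangle inequality gives $\norm{\bFG(\bx)-\bFG(\by)}_{L_2}\le(\text{numerical constant})\cdot\bigl(C_R\abs{\calS}+2C_R\abs{\calS}^2\opnorm{G}\bigr)\norm{\bx-\by}_{L_2}$, and keeping track of the constants (being slightly generous in the Cauchy--Schwarz steps) produces the stated value $L_{\bFG}=2\bigl(C_R\abs{\calS}+2C_R\abs{\calS}^2\opnorm{G}\bigr)$. The one genuinely delicate point is routing the integral-against-$G$ estimate through the operator norm rather than through a crude $\sup$-bound on $G$, since the downstream accuracy theorem must depend on $\opnorm{G^N-G}$; everything else is routine bookkeeping with Cauchy--Schwarz and the uniform rate bound from \ref{assumption:bound_lipschitz_transitions}.
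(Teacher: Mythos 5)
Your proof is correct and follows essentially the same route as the paper: the paper implements your bilinearity split by introducing the intermediate quantity $\hat{\pmb{F}}^G_{u,s}(\bx,\by) = \bR^{\text{uni}}_{u,s}\bx_u + \bx_u^T\int_0^1\bR^{\text{pair}}_{u,v,s}G_{u,v}\by_v\,dv$ and applying the triangle inequality, which yields exactly your three terms (with the unilateral and second pairwise pieces grouped together), and it likewise routes the $G$-integral through $\opnorm{G}$ and uses the a.e.\ bounds on $\bx,\by$ for the local Lipschitz estimate. The only discrepancy is in bookkeeping the powers of $\abs{\calS}$, where the paper's own proof ends with $2(C_R\abs{\calS}^2 + 2C_R\abs{\calS}^3\opnorm{G})$ while its statement (and your derivation) has one power fewer; this affects only the explicit constant, not the validity of the argument.
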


\begin{proof}{Proof.}
Define $\hat{\pmb{F}}^G_{u,s}(\bx,\by) := \bR^{\text{uni}}_{u,s}\bx_u + \bx_u^T \int_0^1 \bR^{\text{pair}}_{u,v,s}  G_{u,v}  \by_v \ dv$, i.e., we replace $\bx_v$ by $\by_{v}$ under the integral. Applying the triangle inequality to the $L_2$ norm gives
\begin{align*}
\norm{\bFG(\bx) - \bFG(\by)}_{L_2} & = \norm{\bFG(\bx) - \hat{\pmb{F}}^G(\bx,\by) + \hat{\pmb{F}}^G(\bx,\by) - F(\by)}_{L_2} \\
&  \leq \norm{\bFG(\bx) - \hat{\pmb{F}}^G(\bx,\by)}_{L_2} + \norm{\hat{\pmb{F}}^G(\bx,\by) - \bFG(\by)}_{L_2}.
\end{align*}
Writing out the definitions and using the bounds of the rate vectors / matrices and the bound of the graphon one immediately obtains
\begin{align*}
\norm{\bFG(\bx) - \hat{\pmb{F}}^G(\bx,\by)}_{L_2} & = 
\sqrt{\sum_{s\in\calS} \int_0^1 \left(\bR^{\text{uni}}_{u,s}\bx_u + \bx_u^T \int_0^1 \bR^{\text{pair}}_{u,v,s} G_{u,v}\bx_v dv - \bR^{\text{uni}}_{u,s}\bx_u - \bx_u^T \int_0^1 \bR^{\text{pair}}_{u,v,s}  G_{u,v}  \by_v \ dv \right)^2 du} \\
& = \sqrt{\sum_{s\in\calS} \int_0^1 \left( \bx_u^T \int_0^1 \bR^{\text{pair}}_{u,v,s} G_{u,v}( \bx_v - \by_v) dv \right)^2 du} \leq 2 \abs{\calS}^3 C_R \opnorm{G} \norm{\bx - \by }_{L_2}
\end{align*}
and
\begin{align*}
\norm{\hat{\pmb{F}}^G(\bx,\by) - \bFG(\by)}_{L_2} & = \sqrt{ \sum_{s\in\calS}\int_0^1  \left( \bR^{\text{uni}}_{u,s}(\bx_u - \by_u) + (\bx_u - \by_u)^T \int_0^1 \bR^{\text{pair}}_{u,v,s}  G_{u,v} \by_v \ dv \right)^2 du} \\
& \leq \bigl(2 C_R \abs{\calS}^2 + 2 \abs{\calS}^3 C_R\opnorm{G}\bigr)\norm{\bx - \by}_{L_2}.
\end{align*}
Last, we conclude that $\norm{\FG(\bx) - \FG(\by)}_{L_2} \leq (2(C_R \abs{\calS}^2 + 2 C_R \abs{\calS}^3 \opnorm{G}))\norm{\bx - \by}_{L_2}$, where the $\abs{\calS}^2$ and $\abs{\calS}^3$ terms come from the vector and matrix notation used for the unilateral and pairwise rates as well as the sum over $\calS$. 
{\hfill \Halmos} 
\end{proof}

\begin{lemma}[Lipschitz Continuous Directional Derivative of $\bFG$] \label{lemma:directional_derivative_drift}
The directional derivative of the drift $\FG$ for $\bx=(x_s)_{s\in\calS}$ with $x_s \in L_2(0,1]$ is given by
\begin{align}
D_x\FG_{u,s}(\bx)(\bh) := \bR^{\text{uni}}_{u,s'}\bh_u + \bh_u^T \int_0^1 \bR^{\text{pair}}_{u,v,s}  G_{u,v}\bx_v dv + \bx_u^T \int_0^1 \bR^{\text{pair}}_{u,v,s} G_{u,v}\bh_v dv \label{eq:def_dir_deriv_FG}
\end{align}
where $\bR^{\text{uni}}_{u,s}, \bR^{\text{pair}}_{u,v,s}$ and $G$ are Lebesgue integrable functions. 
Furthermore $D_x\FG_{u,s}(\bx)(h)$ is Lipschitz-continuous in $x$.
\end{lemma}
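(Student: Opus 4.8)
The plan is to verify the two assertions of the lemma in sequence: first that the stated formula \eqref{eq:def_dir_deriv_FG} is indeed the Gateaux (directional) derivative of $\bFG$, and second that $\bx \mapsto D_x\bFG(\bx)$ is Lipschitz on the relevant subset of $L_2(0,1]^{\calS}$. For the first part, I would fix $\bx, \bh$ and compute the difference quotient $\tfrac1\varepsilon(\bFG(\bx + \varepsilon\bh) - \bFG(\bx))$ directly from the definition \eqref{eq:ode_drift_definition}. The unilateral term $\bR^{\mathrm{uni}}_{u,s}\bx_u$ is linear in $\bx$, so its difference quotient equals $\bR^{\mathrm{uni}}_{u,s}\bh_u$ exactly, with no limit needed. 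The pairwise term is bilinear: expanding $(\bx_u + \varepsilon\bh_u)^T \int_0^1 \bR^{\mathrm{pair}}_{u,v,s} G_{u,v}(\bx_v + \varepsilon\bh_v)\,dv$ produces the two cross terms plus an $\varepsilon^2$ term $\varepsilon^2 \bh_u^T\int_0^1 \bR^{\mathrm{pair}}_{u,v,s}G_{u,v}\bh_v\,dv$. Dividing by $\varepsilon$ and using boundedness of the rates (assumption \ref{assumption:bound_lipschitz_transitions}) together with $\opnorm{G}<\infty$ and finiteness of $\calS$, the $\varepsilon^2$ remainder vanishes in $L_2$ as $\varepsilon \to 0$, which gives exactly \eqref{eq:def_dir_deriv_FG}. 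I would also note that the integrability of $\bR^{\mathrm{uni}}, \bR^{\mathrm{pair}}, G$ that the statement requires follows from assumptions \ref{assumption:piecewise_lipschitz_graphon} and \ref{assumption:bound_lipschitz_transitions} (bounded, piecewise Lipschitz functions on a bounded domain are integrable).

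For the Lipschitz continuity in $\bx$, observe that $D_x\bFG(\bx)(\bh)$ is affine in $\bx$ for each fixed $\bh$: the term $\bR^{\mathrm{uni}}_{u,s}\bh_u$ does not depend on $\bx$ at all, and the two remaining terms $\bh_u^T\int_0^1 \bR^{\mathrm{pair}}_{u,v,s}G_{u,v}\bx_v\,dv$ and $\bx_u^T\int_0^1\bR^{\mathrm{pair}}_{u,v,s}G_{u,v}\bh_v\,dv$ are linear in $\bx$. Hence for two states $\bx, \by$,
\begin{align*}
D_x\bFG_{u,s}(\bx)(\bh) - D_x\bFG_{u,s}(\by)(\bh) = \bh_u^T\int_0^1 \bR^{\mathrm{pair}}_{u,v,s}G_{u,v}(\bx_v - \by_v)\,dv + (\bx_u - \by_u)^T\int_0^1\bR^{\mathrm{pair}}_{u,v,s}G_{u,v}\bh_v\,dv.
\end{align*}
Taking the $L_2$ norm over $(u,s)$, applying Cauchy--Schwarz, the uniform bound $C_R$ on $\bR^{\mathrm{pair}}$, the operator norm bound $\opnorm{G}$, the almost-everywhere bound $|h_{u,s}|\le 1$, and summing over the finitely many states yields a bound of the form $C\,\opnorm{G}\,\abs{\calS}^{p}C_R\,\norm{\bh}_{L_2}\,\norm{\bx - \by}_{L_2}$ for an explicit constant; this is exactly the desired Lipschitz estimate (with the Lipschitz constant of $\bx\mapsto D_x\bFG(\bx)$, as an operator-valued map, being proportional to $C_R\opnorm{G}$ times a power of $\abs{\calS}$). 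This mirrors, and reuses, the estimates already carried out in the proof of Lemma~\ref{lemma:drift_lipschitz}.

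I do not expect a genuine obstacle here; the computation is essentially the same bilinear-form bookkeeping as in Lemma~\ref{lemma:drift_lipschitz}. The one point requiring a little care is making the vanishing of the difference-quotient remainder rigorous in the $L_2$ sense rather than pointwise: one must check that $\varepsilon \bigl\|\bh_u^T\int_0^1\bR^{\mathrm{pair}}_{u,v,s}G_{u,v}\bh_v\,dv\bigr\|_{L_2} \to 0$, which follows since this $L_2$ norm is finite (again by boundedness of the rates, $\opnorm{G}<\infty$, $\norm{\bh}_{L_2}$ finite, and $\abs{\calS}<\infty$). A secondary bookkeeping point is tracking the powers of $\abs{\calS}$ correctly, since the row-vector/matrix notation hides sums over $\calS$; but this is routine and already handled in the same way in the preceding lemma, so I would simply reference that pattern rather than redo it in full detail.
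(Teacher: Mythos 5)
Your proposal is correct and follows essentially the same route as the paper: identify the formula as the Gateaux derivative by expanding the bilinear pairwise term so that only an $\varepsilon^2$ remainder survives, show that remainder vanishes in $L_2$ after dividing by $\varepsilon$, and deduce Lipschitz continuity in $\bx$ from linearity of $D_x\bFG(\cdot)(\bh)$ in $\bx$ together with boundedness of the rates and of $\opnorm{G}$. Your write-up is in fact slightly more explicit than the paper's on the Lipschitz step, which the paper dispatches in one sentence.
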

\begin{proof}{Proof.}
Recall the definition of $\bFG(\bx) = \bR^{\text{uni}}_{u,s'}\bx_{u} + \bx_u^T \int_0^1 \bR^{\text{pair}}_{u,v,s'} G_{u,v}\bx_v \ dv $. 
We define $D_x\FG_{u,s}(\bx)(\bh)$ for $(u,s) \in (0,1] \times \calS$ as in Equation \eqref{eq:def_dir_deriv_FG}.
To see that $D_x\bFG(x)(h)$ is a directional derivative of $\bFG$ in $\bx\in L_2(0,1]^{\calS}$ in direction $h \in L_2$, we show that it fulfills
\begin{align*}
\lim_{\epsilon \downarrow 0} \frac{1}{\epsilon} \norm { \bFG(\bx + \epsilon \bh) - \bFG(\bx) - D_x\bFG(\bx)(\epsilon \bh)}_{L_2} = 0.
\end{align*}
By definition
\begin{align*}
& \norm{ \bFG(\bx + \epsilon \bh) - \bFG(x) - D_x\bFG(\bx)(\bh) }^2_{L_2} \\
& = \int_0^1 \sum_{s} \left( \bR^{\text{uni}}_{u,s}(\bx_{u} + \epsilon \bh_u) + (\bx_u+ \epsilon \bh_u)^T  \int_0^1 \bR^{\text{pair}}_{u,v,s} G_{u,v}(\bx_v + \epsilon \bh_v)\ dv \right. \\
& \qquad - \bR^{\text{uni}}_{u,s'}\bx_{u} - \bx_u^T  \int_0^1 \bR^{\text{pair}}_{u,v,s'} G_{u,v}\bx_v \ dv \\
& \qquad \left. - \epsilon \bR^{\text{uni}}\bh_u - \epsilon \bh_u^T  \int_0^1 \bR^{\text{pair}}_{u,v,s} G_{u,v}\bx_v dv -\epsilon  \bx_u^T  \int_0^1 \bR^{\text{pair}}_{u,v,s} G_{u,v}\bh_v dv \right)^2 du\\
& = \int_0^1 \sum_{s} \left( \epsilon \bR^{\text{uni}}_{u,s} \bh_u + \epsilon \bx_u^T  \int_0^1\bR^{\text{pair}}_{u,v,s} G_{u,v} \bh_v dv + \epsilon \bh_u^T  \int_0^1\bR^{\text{pair}}_{u,v,s} G_{u,v}\bx_v \ dv \right. \\
& \qquad + \epsilon^2 \bh_u^T  \int_0^1\bR^{\text{pair}}_{u,v,s} G_{u,v} \bh_v\ dv \\
& \qquad \left. - \epsilon ( \bR^{\text{uni}}\bh_u - \bh_u^T  \int_0^1 \bR^{\text{pair}}_{u,v,s} G_{u,v}\bx_v dv - \bx_u^T  \int_0^1 \bR^{\text{pair}}_{u,v,s} G_{u,v}\bh_v dv ) \right)^2 du \\
& = \int_0^1 \sum_{s} \left( \epsilon^2 \bh_u^T  \int_0^1 \bR^{\text{pair}}_{u,v,s} G_{u,v} \bh_v dv \right)^2 du.
\end{align*}
Taking dividing by $\epsilon$ and taking $\epsilon \to 0$ shows that $D_x\bFG(\bx)(\bh)$ is indeed a directional derivative of $\bFG$ in $\bx$. By definition, $D_x\bFG_{u,s}(\bx)(\bh)$ is linear in $\bx$ and as $\bR^{\text{uni}}_{u,s}, \bR^{\text{pair}}_{u,v,s}$ and $G_{u,v}$ are bounded functions, the Lipschitz continuity in $L_2$ follows.
{\hfill \Halmos}
\end{proof}

\subsection{Proof of Theorem \ref{thrm:approximation_accuracy}} \label{proof:approximation_accuracy}

In the following, we present the proof of Theorem \ref{thrm:approximation_accuracy}.
\begin{proof}{Proof.}
Let $\bXN_t$ be the $L_2(0,1]^{\abs{\calS}}$ representation of the stochastic system as defined in \ref{sec:representation_X_G_F_L_2}. For a fixed $t\geq0$, define 
\begin{align}
\bnu^N(\tau) = \E[\bODE\bigl(t-\tau, \bXN(\tau)\bigr)] \label{eq:def_nu}
\end{align}
for which we suppress the dependence of the expectation on the graph $G^N$ and initial state $\bXN(0)$ from here on. Using the definition of $\bnu^N$ we rewrite
\begin{align}
\norm{ \E[\bXN(t)] - \ODE(t,\bXN(0)) }_{L_2} & = 
\sqrt{\sum_{s\in\calS} \int_0^1 \left( \E[\XN_{u,s}(t)] - \ODE_{u,s}(t,\bXN(0))\right)^2 du } \notag \\
& = \sqrt{\sum_{s\in\calS} \int_0^1 \left( \nu_{u,s}^N(t) - \nu_{u,s}^N(0) \right)^2du}. \label{eq:ode_stoch_comparison_nu}
\end{align}
To compare the drift of the ODE against the drift of the stochastic process and ultimately bound them, we first want to rewrite $\nu_{u,s}^N(t) - \nu_{u,s}^N(0) = \int_0^t \frac{d}{d\tau} \nu^N_{u,s}(\tau)$, with $\frac{d}{d\tau} \nu^N(\tau)$ being the quantity that fulfills $\bnu^N(b) - \bnu^N(a) = \int_a^b \frac{d}{d\tau} \bnu^N(\theta) d\theta$ for arbitrary $a,b\in[0,t]$ with $a<b$. In Lemma \ref{lemma:nu_derivative} we show that $\frac{d}{d\tau} \bnu
(\theta)$ exists almost everywhere for $\theta \in (0,t)$ and is almost everywhere equal to 
\begin{align}
& \E\left[ \sum_{s'} \int_0^1 \left[ D_{x} \bODE(t-\tau, \bXN(\tau)) \left( \bFG(\bXN(\tau)) - \bFN(\bXN(\tau)) \right) \right]_{v,s'}  dv\right] \label{eq:nu_derivative_drift_difference} \\
& \qquad + \E\left[ \tilde{R}_1(\bODE(t-\tau, \bXN(\tau))) \right]d\tau. \label{eq:nu_derivative_remainder_term}
\end{align} 
In the above sum, $D_{x}\bODE(\tau, \bXN(\tau))\left( \bFG(\bXN(\tau)) - \bFN(\bXN(\tau)) \right)$ is the directional derivative of $\bODE$ in its initial condition in direction $\bFG(\bXN(\tau)) - \bFN(\bXN(\tau))$. 
We aim to bound the sum by using the properties of the derivative of $\bODE$ and the differences between the drift of the deterministic and stochastic system with respect to the $L_2$ norm. The technical details to bound the remainder term and the difference between the drifts are moved to the Lemmas \ref{lemma:remainder_bound} and \ref{lemma:drift_difference_bound}. From the application of the latter lemmas, one obtains the bounds 
\begin{align*}
C_{ D_{x(0)}} \left(  \frac{2 L_{\bR^{\text{pair}}} + 16C^2_{\bR^{\text{pair}}}K_{\bR^{\text{pair}}} }{N} - \frac{16C^2_{\bR^{\text{pair}}}K^2_{\bR^{\text{pair}}} }{N^2} + C_{\bR^\text{pair}}^2 \abs{\calS}^2 \opnorm{G - G^N} \right)
\end{align*}
for Equation \eqref{eq:nu_derivative_drift_difference} and $C_{\tilde{R}}/N$ for Equation \eqref{eq:nu_derivative_remainder_term}. It follows by applying the bounds and rearranging terms, that
\begin{align*}
\norm{ \E[\bXN(t)] - \ODE(t,\bXN(0)) }_{L_2} \leq  \frac{C_A}{N} + C_B \opnorm{G^N -G}
\end{align*} 
for some finite constants $C_A, C_B >0$ additionally to the previous bound also depend on $t$. This concludes the proof.
{\hfill \Halmos}
\end{proof}

\subsection{Proof of Corollary \ref{corollary:graphon_results} and Corollary \ref{corollary:het_approx_results}}
\label{proof:graphon_results}
\label{proof:het_approx_results}

In this section, we prove the corollaries of Theorem~\ref{thrm:approximation_accuracy}. In each case, we use an additional lemma to obtain a bound on $\opnorm{G^N-G}$. 

\begin{proof}{Proof.}[Proof of Corollary \ref{corollary:graphon_results}]
By the application of Theorem \ref{thrm:approximation_accuracy} we have
\begin{align*}
\norm{ \E[\bXN(t)] - \ODE(t,\bXN(0)) }_{L_2} & =  \frac{C_A}{N} + C_B \opnorm{G^N -G}.
\end{align*}
It therefore remains to bound the $L_2$ distance  $\opnorm{G^N -G}$ between the graphon $G$ and graph $G^N$. By application of Lemma~\ref{lemma:graphon_distance} with probability at least $1 - \delta$ and for `large enough' $N$ 
\begin{align*}
\opnorm{G^N -G} \leq \sqrt{\frac{4 \log{(2N/\delta)}}{N}} + 2\sqrt{\frac{(L_G^2 - K_G^2)}{N^2} +\frac{K_G}{N}} =: \psi_{\delta, G}(N).
\end{align*}
Defining and substitution $\delta = 2/N$ into the above equation concludes the proof. {\hfill \Halmos}

\end{proof}


\begin{proof}{Proof.}[Proof of Corollary \ref{corollary:het_approx_results}]
The corollary is a direct implication of Theorem \ref{corollary:graphon_results} and Lemma \ref{lemma:lipschtiz_discretization_error}: due to $G^N$ representing a discretized version of $G$ it directly follows that $\opnorm{G - G^N}$ is of order $\mathcal{O}(1/N)$, which concludes the proof. 
{\hfill \Halmos}
\end{proof}

\section{Conclusion}

In this paper, we study an approximation for a system of particles interacting on a graph. We show that when the interacting graph converges to a graphon, the underlying behavior of the stochastic system converges to a deterministic limit, which we call the graphon mean field approximation. While this result is similar to other results in the literature -- that show that graphon mean field approximations are asymptotically exact in some settings --, our main contribution is to provide precise bounds on the accuracy of this approximation. We showed indeed that the distance between the original finite-$N$ system and the graphon mean field approximation can be bounded by a term $\mathcal{O}(1/N)$ that depends on the number of particles of the system plus a term $\mathcal{O}(\opnorm{G^N-G})$ that depends on the distance between the original graph $G^N$ and the graphon $G$, when measured as a $L_2$ operator.

This paper aims to be methodological. It shows that it is possible to obtain bounds for a system with a graph structure, and not mere asymptotic convergence results. To keep the presentation reasonable, we intentionally considered a relatively simple model, for instance by restricting our attention to pairwise interactions between nodes or unilateral jumps, and by considering that the rate functions $r^N$ are discretized versions of the limiting rates $r$. We believe that by doing so, the proof is easier to follow and could then be adapted to more general cases. 

The focus of this paper is to study the case of dense graphs that converge to graphons. This implies that the total number of edges per node is of order $\mathcal{O}(N)$. We believe that our methodology could be applicable in the not-so-dense case when the number of edges per node goes to infinity at a sub-linear rate. This would probably give bounds that converge more slowly to zero.  Complementary to this paper, another interesting question is the case of sparse graphs, where the number of neighbors per node remains bounded when the number of nodes $N$ goes to infinity. Yet, this would require a fundamentally different approach: studying such a problem is out of the scope of our tools since the graphon mean field approximation is not asymptotically exact for sparse graphs. 


\bibliographystyle{informs2014}
\bibliography{refs}

\begin{appendices}

\section{Lemmas}

This section contains the most technical lemmas of the paper. 

\subsection{Derivative of \texorpdfstring{$\nu$}{ν} (Lemma~\ref{lemma:nu_derivative})}

The following Lemma derives a representation  of the derivative of $\nu$, as defined in the proof of Theorem \ref{thrm:approximation_accuracy}, which depends on the derivative of the graphon mean field approximation with respect to the initial condition, the transitions of the stochastic system and the deterministic drift. 

\begin{lemma}\label{lemma:nu_derivative}
For $\nu^N(\tau)$ and $\tau \in (0,t)$ we define 
\begin{align*}
\frac{d}{d\tau} \nu^N(\tau) := \lim_{h\downarrow 0} \frac{1}{h}\int_\tau^{\tau+h} \lim_{r\downarrow0} \frac{1}{r} \left( \E[\bODE(t-(\theta +r), \bXN(\theta +r))] - \E[\bODE(t-\theta, \bXN(\theta))]  \right)d\theta.
\end{align*}
for which, by construction and continuity properties of $\bODE$ and $\bXN$, it holds that for arbitrary $a, b \in [0,T], a<b$, $\int_a^b \frac{d}{d\tau} \nu^N(\tau) = \nu^N(b) - \nu^N(a)$. Furthermore, the right-hand side is almost everywhere equal to
\begin{align*}
& \E[ \sum_{s'} \int_0^1 \left[ D_{x} \bODE(t-\tau, \bXN(\tau))  \left( \bFG(\bXN(\tau)) - \bFN(\bXN(\tau)) \right) \right]_{v,s} du ] \\
& \quad + \E [ \tilde{R}_1(\bODE(t-\tau, \bXN(\tau))) ].
\end{align*}
which is well defined and bounded due to the properties of the drifts $\bFN, \bFG$ and the approximation $\bODE$. 
In this context, $\tilde{R}_1$ refers to the summed Taylor remainder term defined by 
\begin{align*}
\tilde{R}_1\bigl(\bODE(t-\tau, \bXN(\tau))\bigr) := & \sum_{k,s_k,s_k'} R_1(\bODE(t-\tau, \bXN(\tau)), \BFe_{k,s_k'}^N - \BFe_{k,s_k}^N) \\
& \qquad \times \left[ \XN_{k,s_k}r^{N,\text{uni}}_{k,s_k\to s_k'} + X_{k,s_k} \sum_{l\in[N]} \sum_{s_l} r^{N,\text{pair}}_{k,l,s_k \to s_k',s_l} \frac{G^N_{kl}}{N} X_{l,s_l} \right].
\end{align*}
\end{lemma}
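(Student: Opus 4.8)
The plan is to establish the stated identity for $\frac{d}{d\tau}\nu^N(\tau)$ in two stages: first showing that the inner limit (the $r\downarrow 0$ limit) produces the desired integrand almost everywhere, and then arguing that the outer averaging over $[\tau,\tau+h]$ with $h\downarrow 0$ recovers that integrand at Lebesgue points. For the inner limit, the key observation is that $\E[\bODE(t-(\theta+r),\bXN(\theta+r))]$ differs from $\E[\bODE(t-\theta,\bXN(\theta))]$ through two effects that must be disentangled: the shift in the time argument $t-\theta \mapsto t-\theta-r$ of the flow $\bODE$, and the evolution of the stochastic state $\bXN(\theta)\mapsto\bXN(\theta+r)$. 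The semigroup/flow property of $\bODE$, namely $\bODE(t-\theta-r,\by)=\bODE(t-\theta,\bODE(-r,\by))$ — or equivalently writing both via the integral equation \eqref{eq:definition_ode} — lets me combine these: over a small interval $r$, the deterministic flow moves the initial condition by approximately $-r\,\bFG(\bXN(\theta))$, while the stochastic dynamics moves $\bXN$ by a jump governed by the generator, whose expected displacement is exactly $r\,\bFN(\bXN(\theta))$ to first order. Hence the net first-order change in the initial condition fed into $\bODE(t-\theta,\cdot)$ is $r(\bFN(\bXN(\theta))-\bFG(\bXN(\theta)))$, and differentiating $\bODE(t-\theta,\cdot)$ at $\bXN(\theta)$ in that direction (using Lemma~\ref{lemma:unique_solution}, which guarantees $\bODE$ is $C^1$ in its initial condition with Lipschitz derivative) gives the leading term. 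Wait — I need to be careful with the sign: the displayed formula has $\bFG-\bFN$, which is consistent because moving the time argument backward by $r$ is equivalent to flowing the initial condition backward, contributing $+r\bFG$, against which the stochastic $+r\bFN$ from the state evolution gives a net $\bFG-\bFN$ after accounting for how $\nu$ is defined as $\bODE(t-\tau,\bXN(\tau))$.

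Concretely, I would proceed as follows. First, fix $\theta$ and condition on $\bXN(\theta)=\bx$. Apply a first-order Taylor expansion of $\by\mapsto\bODE(t-\theta,\by)$ around $\bx$ with exact integral remainder $R_1$: this is legitimate because Lemma~\ref{lemma:unique_solution} provides the directional derivative $D_x\bODE$ and its Lipschitz continuity, so $R_1(\bODE(t-\theta,\cdot),\bz)=\bigO(\norm{\bz}^2)$. Second, decompose $\bODE(t-(\theta+r),\bXN(\theta+r))-\bODE(t-\theta,\bXN(\theta))$ into (i) the pure time-shift piece $\bODE(t-\theta-r,\bXN(\theta+r))-\bODE(t-\theta,\bXN(\theta+r))$, which by \eqref{eq:definition_ode} equals $-\int_{0}^{r}\bFG(\bODE(\cdot))$ evaluated appropriately and is $r\bFG(\bXN(\theta+r))+o(r)$ in the relevant sense (here I should instead absorb this into the state argument by noting $\bODE(t-\theta-r,\by)=\bODE(t-\theta,\by-\int_0^r\bFG(\bODE(s,\by))ds)$), and (ii) the pure state-change piece, for which I expand in $\bXN(\theta+r)-\bXN(\theta)$. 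Third, take $\E[\cdot\mid\bXN(\theta)=\bx]$: the jump process has generator such that $\E[\bXN(\theta+r)-\bXN(\theta)\mid\bXN(\theta)=\bx]=r\bFN(\bx)+o(r)$ and $\E[\norm{\bXN(\theta+r)-\bXN(\theta)}^2\mid\bXN(\theta)=\bx]=\bigO(r)$ — the latter because in time $r$ the number of jumps is Poisson-like with mean $\bigO(r)$ and each jump has $\bigO(1/N)$ size in $L_2$; the $\bigO(r)$ rather than $\bigO(r^2)$ is precisely why the remainder term $\tilde R_1$ survives in the limit and does not vanish. Dividing by $r$ and letting $r\downarrow0$ then yields $D_x\bODE(t-\theta,\bx)(\bFN(\bx)-\bFG(\bx))+\tilde R_1(\bODE(t-\theta,\bx))$ — and matching signs/conventions with the display gives the claimed $\bFG-\bFN$ form together with the $\tilde R_1$ term as defined. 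Fourth, for the outer limit, note the integrand $\theta\mapsto\E[D_x\bODE(t-\theta,\bXN(\theta))(\bFG-\bFN)(\bXN(\theta))]+\E[\tilde R_1(\cdot)]$ is (at least) measurable and locally bounded in $\theta$ on $(0,t)$ — boundedness following from the uniform bounds on the rates (Assumption~\ref{assumption:bound_lipschitz_transitions}), the bound $\norm{\bXN(\tau)}\le 1$ coming from the binary representation, and the Lipschitz/boundedness of $D_x\bODE$ — so by the Lebesgue differentiation theorem $\frac1h\int_\tau^{\tau+h}(\cdots)d\theta$ converges to its value at a.e. $\tau$, which establishes both the existence of $\frac{d}{d\tau}\nu^N$ a.e. and the fundamental-theorem-of-calculus identity $\int_a^b\frac{d}{d\tau}\nu^N(\tau)\,d\tau=\nu^N(b)-\nu^N(a)$, since $\tau\mapsto\nu^N(\tau)$ is absolutely continuous (being a composition of the continuous flow with the càdlàg process, averaged under expectation).

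I expect the main obstacle to be the rigorous justification of interchanging limits, expectations, and the differentiation under the integral in the decomposition — in particular, controlling the remainder uniformly so that the $o(r)$ terms are genuinely negligible after dividing by $r$, and verifying that $\E[\tilde R_1(\bODE(t-\theta,\bXN(\theta)))]$ is well-defined and finite (which reduces to showing $R_1(\bODE(t-\theta,\cdot),\BFe_{k,s_k'}^N-\BFe_{k,s_k}^N)=\bigO(1/N^2)$ per jump, times the total jump rate $\bigO(N)$, giving the $\bigO(1/N)$ that later feeds into Lemma~\ref{lemma:remainder_bound}). A secondary subtlety is bookkeeping the sign conventions between the backward time-shift and the forward state evolution so that the final expression matches $\bFG-\bFN$ exactly as written; I would pin this down by treating the trivial case $\bFN=\bFG$ first, where $\nu^N$ is exactly constant and both sides must vanish, which fixes the relative sign unambiguously. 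The smoothness input from Lemma~\ref{lemma:unique_solution} and the uniform-rate bounds from Assumption~\ref{assumption:bound_lipschitz_transitions} together with $\abs{\XN_{u,s}}\le1$ are the essential ingredients, and I would invoke them explicitly at each place an estimate is needed rather than redoing the elementary $L_2$ computations.
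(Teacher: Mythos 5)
Your proposal follows essentially the same route as the paper's proof: the same add-and-subtract decomposition into a time-shift piece (handled via the semigroup property $\bODE(t-\theta-r,\cdot)=\bODE(t-\theta,\bODE(-r,\cdot))$ and the chain rule for $D_x\bODE$, using the regularity from Lemma~\ref{lemma:unique_solution}) and a state-evolution piece (handled via the CTMC generator together with a first-order Taylor expansion with integral remainder, whose surviving remainder is exactly $\tilde{R}_1$), followed by the Lebesgue differentiation theorem for the outer $h\downarrow 0$ limit. The only cosmetic difference is that you expand in the aggregate increment $\bXN(\theta+r)-\bXN(\theta)$ rather than jump-by-jump (equivalent after conditioning on the number of jumps in $[\theta,\theta+r]$), and your sign bookkeeping yielding $\bFN-\bFG$ is arguably the more careful one --- the discrepancy with the stated $\bFG-\bFN$ is harmless since only the $L_2$ norm of this quantity enters the proof of Theorem~\ref{thrm:approximation_accuracy}.
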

\begin{proof}{Proof.}
Recall, that $\bXN$ is in the following handled as a $L_2$ function as described in Section \ref{sec:representation_X_G_F_L_2}. We start the proof by rewriting
\begin{align*}
\frac{d}{d\tau} \nu(\tau) & := \lim_{h\downarrow 0} \frac{1}{h}\int_\tau^{\tau+h} \lim_{r\downarrow0} \frac{1}{r} \left( \E[\bODE(t-(\theta +r), \bXN(\theta +r))] - \E[\bODE(t-\theta, \bXN(\theta))]  \right)d\theta \\
&= \lim_{h\downarrow 0} \frac{1}{h}\int_\tau^{\tau+h} \lim_{r\downarrow0} \frac{1}{r} \left( \E[\bODE(t-(\theta +r), \bXN(\theta +r))] - \E[\bODE(t-\theta, \bXN(\theta +r))]  \right) d\theta\\
& \quad + \lim_{h\downarrow 0} \frac{1}{h}\int_\tau^{\tau+h} \lim_{r\downarrow0}\frac{1}{r} \left( \E[\bODE(t-\theta, \bXN(\theta +r))] - \E[\bODE(t-\theta, \bXN(\theta))]  \right) d\theta \\
&= \lim_{h\downarrow 0} \frac{1}{h}\int_\tau^{\tau+h} \lim_{r\downarrow0} \frac{1}{r} \left( \E\left[ \bODE(t-\theta, \bODE(-r,\bXN(\theta +r)) ) - \bODE(t-\theta, \bXN(\theta +r))] \right]  \right) d\theta \\
& \quad + \lim_{h\downarrow 0} \frac{1}{h}\int_\tau^{\tau+h} \lim_{r\downarrow0}\frac{1}{r} \left(  \E\left[ \E[\bODE(t-\theta, \bXN(\theta +r)) - \bODE(t-\theta, \bXN(\theta))\mid \bXN(\tau)] \right] \right) d\theta \\ 
&= \lim_{h\downarrow 0} \frac{1}{h}\int_\tau^{\tau+h} \E\left[\lim_{r\downarrow0} \frac{1}{r} \left( \bODE(t-\theta, \bODE(-r,\bXN(\theta +r)) ) - \bODE(t-\theta, \bXN(\theta +r))  \right) \right] d\theta \\
& \quad + \lim_{h\downarrow 0} \frac{1}{h}\int_\tau^{\tau+h}  \E\left[ \lim_{r\downarrow0} \frac{1}{r} \left(  \E[ \bODE(t-\theta, \bXN(\theta +r)) - \bODE(t-\theta, \bXN(\theta)) \mid \bXN(\tau)] \right) \right] d\theta
\end{align*}
For the first equality, an artificial zero is added by adding and subtracting the term $\bODE(t-\theta, \bXN(\theta +r))$ and using dominated convergence in combination with the boundedness of the solution of the differential equation and the stochastic system almost everywhere. For the second equality, we use the tower property to write the difference in the conditional expectation with respect to $\bXN(\tau)$. The last equality, follows by dominated convergence which allows to take the limit inside the outer expectation and by definition of $\bODE$ as well as the drift $\bFG$ which allows to rewrite $ \bODE(t-(\theta + r), \bXN(\theta +r)) = \bODE(t-\theta, \bODE(-r,\bXN(\theta +r)))$. By definition, $\bODE$ is continuously differentiable in $L_2$ with respect to its initial condition. The property follows by a general version of Grönwalls inequality and the Lipschitz properties of the drift, also see 
\cite{driverAnalysisToolsApplications2003} for a proof of this property. Using the differentiability in combination with the Lebesgue differentiation theorem for $L_p$ spaces and application of the chain rule we can rewrite
\begin{align}
& \lim_{h\downarrow 0} \frac{1}{h}\int_\tau^{\tau+h} \E\left[ \lim_{r\downarrow0} \frac{1}{r} \left( \bODE(t-\theta, \bODE(-r,\bXN(\theta +r)) ) - \bODE(t-\theta, \bXN(\theta +r))  \right) \right] d\theta \notag \\
&   = \E\left[ D_x \bODE(t-\tau, \bXN(\tau))\bFG(\bXN(\tau)) \right] \label{eq:ode_deriv_initial_cond}
\end{align}
where equality holds almost everywhere and $D_x \bODE(t-\tau, \bXN(\tau))\bFG(\bXN(\tau))$ is the directional derivative of $\bODE$ in its initial condition with direction $\bFG(\bXN(\tau))$. For the second integral, we apply Lebesgue differentiation theorem in combination with the definition of the transitions for the stochastic system. In particular, we use that the probability for a small $d\tau>0$ the state of the stochastic system changes to $\bXN(\tau + d\tau) = \bXN(\tau) + \BFe_{k,s_k'}^N - \BFe_{k,s_k}^N$  is given by $d\tau \XN_{k,s_k}(\tau)r^{N,\text{uni}}_{k,s_k\to s_k'} + d\tau \XN_{k,s_k}(\tau) \sum_{l\in[N]} \sum_{s_l} r^{N,\text{pair}}_{k,l,s_k \to s_k',s_l} \frac{G^N_{kl}}{N} \XN_{l,s_l}(\tau) +  d\tau)$. 
This shows
\begin{align*}
& \lim_{h\downarrow 0} \int_\tau^{\tau+h} \E\left[ \lim_{r\downarrow0}\frac{1}{r} \left( \bODE(t-\theta, \bXN(\theta +r)) - \bODE(t-\theta, \bXN(\theta))  \right) \right] d\theta \\
& = \E\left[ \lim_{d\tau \downarrow 0} \frac{1}{d\tau}  \sum_{k,s_k,s_k'} \left( \bODE(t-\tau, \bXN(\tau) + \BFe_{k,s_k'}^N - \BFe_{k,s_k}^N) - \bODE(t-\tau, \bXN(\tau)) \right) \right. \\
& \qquad \times \left.\left( d\tau \XN_{k,s_k}(\tau)r^{N,\text{uni}}_{k,s_k\to s_k'} + d\tau \XN_{k,s_k}(\tau) \sum_{l\in[N]} \sum_{s_l} r^{N,\text{pair}}_{k,l,s_k \to s_k',s_l} \frac{G^N_{kl}}{N} \XN_{l,s_l}(\tau) + o(d\tau)\right) \right] 
\end{align*}
where equality holds almost everywhere. 
Lastly, by continuous differentiability of $\bODE$ with respect to its initial condition, we apply the $L_2$ version of the Taylor expansion, see Lemma \ref{thrm:taylor_with_remainder_for_banach_spaces}, for $\bODE(t-\tau, \bXN(\tau) + \BFe_{k,s_k'}^N - \BFe_{k,s_k}^N)$ around $\bODE(t-\tau, \bXN(\tau))$. Note again, that the equality for the expansion hold almost everywhere.
This allows to rewrite
\begin{align*}
\bODE(t-\tau, \bXN(\tau) + \BFe_{k,s_k'}^N - \BFe_{k,s_k}^N) = & \bODE(t-\tau, \bXN(\tau))  + \int_0^1 \sum_{s'} \left[ D_{x}\bODE(t-\tau, \bXN(\tau)) \left( \BFe_{k,s_k'}^N - \BFe_{k,s_k}^N\right)\right]_{v,s'} \\ 
& + R_1(\bODE(t-\tau, \bXN(\tau)), \BFe_{k,s_k'}^N - \BFe_{k,s_k}^N).
\end{align*}
Here, $D_{x}\bODE(t-\tau, \bXN(\tau)) \left( \BFe_{k,s_k'}^N - \BFe_{k,s_k}^N\right)$ is the directional derivative of $\bODE$ in direction $\BFe_{k,s_k'}^N - \BFe_{k,s_k}^N$ which should be interpreted as $L_2$ similar as for the $L_2$ representation of $\bXN$.
By $R_1(\bODE(t-\tau, \bXN(\tau)), \BFe_{k,s_k'}^N - \BFe_{k,s_k}^N)$ we denote the Taylor remainder term of first order defined as
\begin{align*}
& R_1(\bODE(t-\tau, \bXN(\tau)), \BFe_{k,s_k'}^N - \BFe_{k,s_k}^N) \\ 
& = \int_0^1(1-\delta)  \int_0^1 \sum_{s'}\left[ \left( D_x\bODE(t-\tau, \bXN(\tau) + \delta (e_{k,s_k'}^N - \BFe_{k,s_k}^N)) - D_x\bODE(t-\tau, \bXN(\tau)) \right) \right. \\
& \qquad  \left. \times (e_{k,s_k'}^N - \BFe_{k,s_k}^N)\right]_{v,s'} \ dv \ d\delta.
\end{align*}
Adding this into the previous right-hand side and rearranging the terms yields by using the linearity of the directional derivative
\begin{align*}
& \E \left[ \sum_{k,s_k,s_k'}\left( \int_0^1 \sum_{s'} \left[ D_{x}\bODE(t-\tau, \bXN(\tau)) ( \BFe_{k,s_k'}^N - \BFe_{k,s_k}^N)\right]_{v,s'}dv + R_1(\bODE(t-\tau, \bXN(\tau)), \BFe_{k,s_k'}^N - \BFe_{k,s_k}^N)\right) \right.\\
& \qquad \times \left.\left( \XN_{k,s_k}(\tau)r^{N,\text{uni}}_{k,s_k\to s_k'} + \XN_{k,s_k}(\tau) \sum_{l\in[N]} \sum_{s_l} r^{N,\text{pair}}_{k,l,s_k \to s_k',s_l} \frac{G^N_{kl}}{N} \XN_{l,s_l}(\tau) \right) \right] \\
& = \E \left[ \int_0^1 \sum_{s'} \left[ D_{x}\bODE(t-\tau, \bXN(\tau)) \bFN(\bXN(\tau)) \right]_{v,s'} dv \right] + \E\left[ \tilde{R}_1\bigl(\bODE(t-\tau, \bXN(\tau))\bigr) \right].
\end{align*}
with 
\begin{align}
\tilde{R}_1\bigl(\bODE(t-\tau, \bXN(\tau))\bigr) := & \sum_{k,s_k,s_k'} R_1(\bODE(t-\tau, \bXN(\tau)), \BFe_{k,s_k'}^N - \BFe_{k,s_k}^N) \label{eq:summed_remainder_def} \\
& \times \left( \XN_{k,s_k}r^{N,\text{uni}}_{k,s_k\to s_k'} + \XN_{k,s_k} \sum_{l\in[N]} \sum_{s_l} r^{N,\text{pair}}_{k,l,s_k \to s_k',s_l} \frac{G^N_{kl}}{N} \XN_{l,s_l} \right) . \notag
\end{align}
Ultimately, we rewrite the initial integral by using the formulations of Equations \eqref{eq:ode_deriv_initial_cond} and \eqref{eq:summed_remainder_def} to
\begin{align*}
& \lim_{h\downarrow 0} \frac{1}{h}\int_\tau^{\tau+h} \lim_{r\downarrow0} \frac{1}{r} \left( \E[\bODE(t-(\theta +r), \bXN(\theta +r))] - \E[\bODE(t-\theta, \bXN(\theta))]  \right)d\theta \\
& = \E[ \sum_{s'} \int_0^1 \left[ D_{x} \bODE(t-\tau, \bXN(\tau)) \left( \FG(\bXN(\tau)) - \bFN(\bXN(\tau)) \right) \right]_{v,s'} dv \mid G^N , \bXN(0)] \\
& \qquad + \E [ \tilde{R}_1(\bODE(t-\tau, \bXN(\tau))) \mid G^N , \bXN(0) ]
\end{align*} 
where equality holds almost everywhere. This concludes the proof. 
{\hfill \Halmos}
\end{proof}

\subsection{Bound on the Taylor Remainder Term (Lemma~\ref{lemma:remainder_bound})}

The subsequent Lemma states an upper bound for the remainder term arising in Lemma \ref{lemma:nu_derivative} which ultimately is used to obtain the bound for Theorem \ref{thrm:approximation_accuracy}. 

\begin{lemma} \label{lemma:remainder_bound}
For any attainable state $\bX\in\calX^N$ of the stochastic process and $\tau\geq0$, the expectation of the Taylor remainder term $\tilde{R}_1 \bigl(\ODE_{u,s}(t,\bX)\bigr)$\footnote{In order for $\bODE_t$ to be properly defined, $\bX$ should be understood in this context as a $L_2(0,1]^{\abs{\calS}}$ function.} as defined in Equation \eqref{eq:summed_remainder_def} is of order $\mathcal{O}(1/N)$.
\end{lemma}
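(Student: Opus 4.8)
The plan is to bound the expected summed remainder $\E[\tilde R_1(\bODE(t-\tau,\bXN(\tau)))]$ by controlling two ingredients: the size of each individual remainder term $R_1(\bODE(\cdot,\bXN(\tau)),\BFe_{k,s_k'}^N-\BFe_{k,s_k}^N)$, and the total rate at which jumps occur. First I would recall from the definition in Equation~\eqref{eq:summed_remainder_def} that
\begin{align*}
\tilde R_1(\bODE(t-\tau,\bXN(\tau))) = \sum_{k,s_k,s_k'} R_1\bigl(\bODE(t-\tau,\bXN(\tau)),\BFe_{k,s_k'}^N-\BFe_{k,s_k}^N\bigr)\,\rho^N_{k,s_k,s_k'}(\tau),
\end{align*}
where $\rho^N_{k,s_k,s_k'}(\tau) = \XN_{k,s_k}r^{N,\mathrm{uni}}_{k,s_k\to s_k'} + \XN_{k,s_k}\sum_{l}\sum_{s_l} r^{N,\mathrm{pair}}_{k,l,s_k\to s_k',s_l}\frac{G^N_{kl}}{N}\XN_{l,s_l}$ is the jump rate. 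By assumption~\ref{assumption:bound_lipschitz_transitions} the rates are bounded by $C_R$, and since $G^N_{kl}\le 1$, each $\rho^N_{k,s_k,s_k'}(\tau) \le C_R(1 + \abs{\calS})$ (using that $\XN$ is a $0/1$ state and $\sum_l \XN_{l,s_l}/N \le 1$). Summing over the $k$ and the $O(\abs{\calS}^2)$ choices of $(s_k,s_k')$ gives a total rate of order $O(N)$, i.e. there is a constant $C_\rho$ with $\sum_{k,s_k,s_k'}\rho^N_{k,s_k,s_k'}(\tau) \le C_\rho N$.

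The second, and main, step is to show that each remainder term is of order $O(1/N^2)$. From the integral form of $R_1$ given in Lemma~\ref{lemma:nu_derivative}, we have
\begin{align*}
\abs{R_1\bigl(\bODE(t-\tau,\bXN(\tau)),\bw\bigr)} \le \int_0^1(1-\delta)\,\bigl\|\bigl(D_x\bODE(t-\tau,\bXN(\tau)+\delta\bw) - D_x\bODE(t-\tau,\bXN(\tau))\bigr)(\bw)\bigr\|_{L_2}\,d\delta
\end{align*}
with $\bw = \BFe_{k,s_k'}^N-\BFe_{k,s_k}^N$. By Lemma~\ref{lemma:unique_solution} the derivative $D_x\bODE$ is Lipschitz continuous in the initial condition (with respect to $L_2$), so this is bounded by $\tfrac12 L_{D\bODE}\,\|\bw\|_{L_2}^2$. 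The key observation is that $\|\BFe_{k,s_k'}^N-\BFe_{k,s_k}^N\|_{L_2}^2$, when $\BFe_{k,s}^N$ is interpreted as the $L_2(0,1]$ step function supported on $((k-1)/N,k/N]$, equals $2/N$ (each nonzero entry contributes $1^2 \cdot \tfrac1N$). Hence $\abs{R_1(\cdot,\bw)} \le L_{D\bODE}/N$. Actually one factor of $1/N$ already suffices given the $O(N)$ jump rate bound; but combining the individual bound $O(1/N)$ with the total rate $O(N)$ would only give $O(1)$, so I need to be more careful: the remainder should be $O(1/N^2)$. This comes from noting that the \emph{difference of derivatives} at arguments $2/N$-apart is itself Lipschitz-small: $\|D_x\bODE(\cdot,\bXN+\delta\bw)-D_x\bODE(\cdot,\bXN)\|_{\mathrm{op}} \le \delta L_{D\bODE}\|\bw\|_{L_2} = O(1/\sqrt N)$, and this operator is then applied to $\bw$ with $\|\bw\|_{L_2}=O(1/\sqrt N)$, giving $\abs{R_1(\cdot,\bw)} = O(1/N)$ per term — and since only $O(N)$ of the $O(N\abs{\calS}^2)$ terms have nonzero rate, multiplying gives $O(1)$. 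To get $O(1/N)$ for $\tilde R_1$ I must instead bound $\sum_{k,s_k,s_k'}\abs{R_1}\,\rho^N_{k,s_k,s_k'} \le \tfrac12 L_{D\bODE}\,\tfrac2N\sum_{k,s_k,s_k'}\rho^N_{k,s_k,s_k'} \le \tfrac12 L_{D\bODE}\tfrac2N\cdot C_\rho N = O(1)$ — still not enough. The resolution, which is the genuine obstacle, is that the bound $\|\bw\|_{L_2}^2 = 2/N$ must be paired with the Lipschitz constant $L_{D\bODE}$ \emph{and} an extra $1/N$ coming from either (i) a second-derivative bound on $\bODE$ (writing $R_1$ as a full second-order remainder, which is $O(\|\bw\|^2) = O(1/N)$ per term, times rate $O(N)$ still gives $O(1)$), or more likely (ii) recognizing that what actually appears is $\|\bw\|^2_{L_2}$ times a uniform bound, and the $\abs{\calS}^2$ factor and the jump-rate normalization conspire so that $\sum_{k}\rho^N_{k}\cdot\|\bw\|^2 = O(1)$ while a careful accounting of the $D_x\bODE$ structure applied componentwise yields the extra $1/N$.

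Given the delicacy above, I would carry out the argument as follows: (1) write $\tilde R_1 = \sum_{k,s_k,s_k'} R_1(\cdot,\bw_{k,s_k,s_k'})\rho^N_{k,s_k,s_k'}(\tau)$; (2) bound $\abs{R_1(\cdot,\bw_{k,s_k,s_k'})} \le \tfrac12 L_{D\bODE}\|\bw_{k,s_k,s_k'}\|_{L_2}^2 = L_{D\bODE}/N$ using the Lipschitz continuity of $D_x\bODE$ from Lemma~\ref{lemma:unique_solution} and $\|\bw_{k,s_k,s_k'}\|_{L_2}^2 = 2/N$; (3) observe that for fixed $k$, at most $\abs{\calS}$ pairs $(s_k,s_k')$ have $\XN_{k,s_k}=1$, namely those with $s_k$ equal to the current state of particle $k$, so $\sum_{s_k,s_k'}\rho^N_{k,s_k,s_k'}(\tau) \le \abs{\calS} C_R(1+\abs{\calS})$ — a bound independent of $N$; (4) conclude $\E[\abs{\tilde R_1}] \le \sum_k \tfrac{L_{D\bODE}}{N}\cdot \abs{\calS}C_R(1+\abs{\calS}) = L_{D\bODE}\abs{\calS}C_R(1+\abs{\calS}) = O(1)$ — wait, this is $O(1)$, which contradicts the claim. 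So the correct accounting must instead use that the \emph{per-particle} contribution to $\tilde R_1$ is $O(1/N^2)$: the directional derivative $D_x\bODE(\cdot)(\bw_{k,s_k,s_k'})$ is itself $O(1/N)$ in $L_2$-norm because $\bw_{k,s_k,s_k'}$ perturbs only a mass-$1/N$ slice of the initial condition, and the \emph{difference} of two such directional derivatives at nearby points is $O(1/N)\cdot O(1/\sqrt N)$... I would reconcile this by carefully using the $L_2$-Taylor remainder Lemma~\ref{thrm:taylor_with_remainder_for_banach_spaces} together with the fact that both the perturbation norm and the number of "active" perturbations scale with $1/N$, so that $\E[\abs{\tilde R_1}] = O(N)\cdot O(1/N)\cdot O(1/N) = O(1/N)$. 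The main obstacle, then, is exactly this bookkeeping: pinning down precisely which factor of $1/N$ comes from the $L_2$-norm of the perturbation direction, which from the Lipschitz modulus of $D_x\bODE$, and which from the boundedness of the jump rates, so that they multiply to the claimed $\mathcal O(1/N)$ rather than $\mathcal O(1)$; this requires using the finer structure of $D_x\bODE$ (that perturbing coordinate $k$ only affects the solution through a localized slice) rather than merely its global Lipschitz bound.
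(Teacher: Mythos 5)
You follow the same route as the paper's own proof of this lemma: bound each individual remainder $R_1$ through the Lipschitz continuity of $D_x\bODE$ from Lemma~\ref{lemma:unique_solution} together with $\norm{\BFe^N_{k,s_k'}-\BFe^N_{k,s_k}}^2_{L_2}=2/N$, bound the per-transition rate by an $N$-independent constant using \ref{assumption:bound_lipschitz_transitions} and $\sum_l G^N_{kl}/N\le 1$, and sum over the $O(N\abs{\calS})$ active transitions. But you do not finish: your own bookkeeping (correctly) shows that these three ingredients multiply to $O(N)\cdot O(1/N)\cdot O(1)=O(1)$, and you end by conjecturing that the missing factor of $1/N$ must come from the localized structure of $D_x\bODE$ rather than its global Lipschitz bound. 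That conjecture is right, but since you never carry it out, the lemma is not proved; this is a genuine gap. (It is worth noting that the paper's written proof also stops at exactly these three bounds before asserting the conclusion, so the localization step is not some other trick you overlooked --- it is the step that has to be supplied.)

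To close the gap, observe that $R_1$ is not the $L_2$ norm of $\bigl(D_x\bODE(\cdot+\delta\bh)-D_x\bODE(\cdot)\bigr)(\bh)$ with $\bh:=\BFe^N_{k,s_k'}-\BFe^N_{k,s_k}$, but its integral over $(v,s')$, and $\bh$ is supported on a slice of measure $1/N$. In the variational equation for $\psi=D_x\bODE(\bh)$, the drift derivative $D_x\FG_{u,s}(\bx)(\psi)$ splits into terms local in $u$ and the coupling term $\bx_u^T\int_0^1\bR^{\text{pair}}_{u,v,s}G_{u,v}\psi_v\,dv$, which is of order $\norm{\psi}_{L_1}=O(1/N)$ uniformly in $u$; a Gr\"onwall argument in $L_\infty$ and $L_1$ then shows $\psi_v(t)=O(1)$ for $v\in((k-1)/N,k/N]$ and $O(1/N)$ elsewhere, and the same scheme applied to the difference of derivatives shows that the integrand of $R_1$ is $O(1/N)$ on that interval and $O(1/N^2)$ off it. Integrating over $v\in(0,1]$ therefore gives $R_1=O(1/N^2)$ per transition rather than the $O(1/N)$ delivered by the global bound, and multiplying by the $O(N)$ total jump rate yields the claimed $O(1/N)$. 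Without this refinement (or an equivalent estimate stating that the second derivative of $\bODE_{u,s}$ with respect to the coordinates of a single particle $k$ is $O(1/N^2)$ for $u$ outside particle $k$'s slice), the argument you wrote down only delivers $O(1)$, exactly as you observed.
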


\begin{proof}{Proof.}
For a state $\bX\in\calX^N$, recall the definition of Equation \eqref{eq:summed_remainder_def} 
\begin{align*}
\tilde{R}_1 \bigl(\ODE_{u,s}(\tau,\bX) \bigr) = \sum_{k\in[N], s_k, s_k'\in\calS} & R_1(\ODE_{u,s}(\tau,.),\bX, e^{(k,s_k')} - e^{(k,s_k)})  \\
& \quad \times \bigl( \XN_{k/N,s_k}r^{N,\text{uni}}_{k,s_k\to s_k'} + \XN_{k/N,s_k}  \sum_{s_{l}\in\calS}\int_0^1 r^{\text{pair}}_{k/N,v,s_k\to s_k', s_l}  G^N_{k/N,v} \XN_{v,s_l}dv \bigr)
\end{align*}
where we use that $r^{N,\text{pair}}_{k,l,s_k \to s_k',s_l} = r^{\text{pair}}_{k/N,v,s_k\to s_k', s_l}$ for $v\in(l-1/N, l]$ and $R_1$ given by
\begin{align*}
& R_1(\ODE_{u,s}(\tau,.),\bX, \BFe^N_{(k,s_k)} - \BFe^N_{(k,s_k)}) = \\ 
& \int_0^1 (1-\delta) \sum_{s'} \int_0^1  \left[ \left( D_x \bODE\bigl(\tau-\delta,\bX + \delta(\BFe^N_{(k,s_k)} - \BFe^N_{(k,s_k)})\bigr) - D_{x} \bODE\bigl(\tau,\bX \bigr) \right) (\BFe^N_{(k,s_k')} - \BFe^N_{(k,s_k)})\right]_{v,s'} dv \ d\delta.
\end{align*}
To be clear, as in the proof of Lemma \ref{lemma:nu_derivative}, $\left[ D_{x}\bODE(t, \bx)(\by) \right]_{v,s'}$ refers to the directional derivative of $\bODE$ in its initial condition in direction $\by$ evaluated at $(v,s')$. By the properties of $\bODE$, as stated in Lemma \ref{lemma:unique_solution}, $D_{x} \ODE$ is Lipschitz continuous in with respect to the initial condition. Thus, we can bound
\begin{align*}
& \sum_{s'} \vert\vert \left( D_{x} \ODE \bigl(\tau,\bX + \delta(\BFe^N_{(k,s_k)} -  \BFe^N_{(k,s_k)})) -  D_{x} \ODE_{u,s}\bigl(\tau,\bX\bigr) \right) (\BFe^N_{(k,s_k')} - \BFe^N_{(k,s_k)}) \vert\vert_{L_2} \\
& \leq \sum_{s'} L_{D_{x(0)}\ODE} \norm{(\BFe^N_{(k,s_k)} - \BFe^N_{(k,s_k)}) }_{L_2} 
\end{align*}
Application of Cauchy-Schwarz then yields
\begin{align*}
& R_1(\ODE_{u,s}(t,.),\bX, \BFe^N_{(k,s_k')} - \BFe^N_{(k,s_k)}) \\
& \leq \sum_{k, s_k, s_k'} L_{D_{x}\ODE} \norm{(\BFe^N_{(k,s_k)} - \BFe^N_{(k,s_k)}}^2_{L_2} \abs{ \bigl( \XN_{k/N,s_k}r^{N,\text{uni}}_{k,s_k\to s_k'} + \XN_{k/N,s_k}  \sum_{s_{l}\in\calS} \int_0^1 r^{\text{pair}}_{k/N,v,s_k\to s_k', s_l}  G^N_{k/N,v} \XN_{v,s_l}dv }.
\end{align*}
By the boundedness of the rates and the finite state space, it is immediate to see that terms of the form 
\begin{align*}
\XN_{k/N,s_k}r^{\text{uni}}_{k,s_k\to s_k'} + \XN_{k/N,s_k}  \sum_{s_{l}\in\calS} \int_0^1 r^{\text{pair}}_{k/N,v,s_k\to s_k', s_l}  G^N_{k/N,v} \XN_{v,s_l}dv
\end{align*}
are bounded. Additionally,  by definition of $e^{(k,s_k)}$ which are non-zero only on a sub-interval of $(0,1]$ of size $\frac{1}{N}$, $\norm{e^{N}_{(k,s_k)} - \BFe^N_{(k,s_k)}}^2_{L_2} \leq 2/N$.
Lastly, application of the mentioned properties and bounds show that there exists a constant $C_{\tilde{R}} > 0$ such that 
$\tilde{R}_1 \bigl(\ODE_{u,s}(t,\bX)\bigr)=C_{\tilde{R}}/N=\mathcal{O}(1/N)$.
{\hfill \Halmos}
\end{proof}

\subsection{Bound on the Difference of Stochastic and Deterministic Drift (Lemma~\ref{lemma:drift_difference_bound})}

Lemma \ref{lemma:drift_difference_bound} is concerned with bounding the difference between the drift of the stochastic and deterministic system. As stated in the Lemma, the bound depends mainly on the $L_2$ difference between rates and between the graphon and graph of the stochastic system. 

\begin{lemma}\label{lemma:drift_difference_bound}
Let $D_{x} \ODE(t, \bX)(\by)$ be the directional derivative of $\bODE$ with respect to the initial condition in direction $\by$ in $L_2$. Then, \begin{align*}
\int_0^1 \sum_{s'} \left[D_{x} \ODE(t, \bX) \left( \bFG\bigl(\bODE(t, \bX) \bigr) - \bFN (\bODE(t, \bX)) \right) \right]_{v,s'} dv 
\end{align*} 
is bounded by
$6 C_{ D_{x(0)}} \left(  \frac{2 L_{\bR^{\text{pair}}} + 16C^2_{\bR^{\text{pair}}}K_{\bR^{\text{pair}}} }{N} - \frac{16C^2_{\bR^{\text{pair}}}K^2_{\bR^{\text{pair}}} }{N^2} + C_{\bR^\text{pair}}^2 \abs{\calS}^2 \opnorm{G - G^N} \right)$.
\end{lemma}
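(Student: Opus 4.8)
The plan is to bound the quantity in two stages: first peel off the directional derivative $D_x\bODE$ using its known boundedness (a consequence of Lemma~\ref{lemma:unique_solution}, which asserts that $\bODE$ is continuously differentiable in its initial condition with Lipschitz derivative), and then bound the $L_2$-norm of the drift difference $\bFG(\bx) - \bFN(\bx)$ evaluated at $\bx = \bODE(t,\bX)$. For the first stage, since $D_x\bODE(t,\bX)$ is a bounded linear operator on $L_2(0,1]^{\abs{\calS}}$ with operator norm at most some constant $C_{D_{x(0)}}$ (uniformly over attainable $\bX$ and over $t$ in a compact interval, which follows from Grönwall applied to the variational equation), an application of Cauchy--Schwarz over $\calS$ and $(0,1]$ gives
\begin{align*}
\Bigl| \int_0^1 \sum_{s'} \bigl[D_x\bODE(t,\bX)\bigl(\bFG(\bODE(t,\bX)) - \bFN(\bODE(t,\bX))\bigr)\bigr]_{v,s'} dv \Bigr|
\le C_{D_{x(0)}} \sqrt{|\calS|}\,\norm{\bFG(\bODE(t,\bX)) - \bFN(\bODE(t,\bX))}_{L_2}.
\end{align*}
So the task reduces to bounding $\norm{\bFG(\bx) - \bFN(\bx)}_{L_2}$ for $\bx$ a step function with entries in $[0,1]$.

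For the second stage, I would write out $\bFG_{u,s}(\bx) - \bFN_{u,s}(\bx)$ using the representations \eqref{eq:drift_N} and \eqref{eq:ode_drift_definition}. The unilateral terms $\bR^{\mathrm{uni}}_{u,s}\bx_u$ agree exactly under assumption~\ref{assumption:bound_lipschitz_transitions} (the rates of the $N$-system are the discretizations of the limiting rates), so only the pairwise terms remain: the difference is
\begin{align*}
\bx_u^T\int_0^1 \bR^{\mathrm{pair}}_{u,v,s} G_{u,v}\, \bx_v\, dv \;-\; \bx_u^T\int_0^1 \bR^{N,\mathrm{pair}}_{u,v,s} G^N_{u,v}\, \bx_v\, dv,
\end{align*}
which I would split by adding and subtracting $\bx_u^T\int_0^1 \bR^{\mathrm{pair}}_{u,v,s} G^N_{u,v}\bx_v\,dv$ into a \emph{rate-discretization} part and a \emph{graphon-discretization} part. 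The rate-discretization part is controlled by the piecewise Lipschitz continuity of $v\mapsto \bR^{\mathrm{pair}}_{u,v,s}$ (constant $L_{\bR^{\mathrm{pair}}}$): on each of the $K_{\bR^{\mathrm{pair}}}$ Lipschitz pieces the integrand varies by $O(1/N)$, and the finitely many boundary cells between pieces contribute $O(1/N^2)$ after squaring — this is exactly the source of the $\tfrac{2L_{\bR^{\mathrm{pair}}} + 16 C^2_{\bR^{\mathrm{pair}}}K_{\bR^{\mathrm{pair}}}}{N} - \tfrac{16 C^2_{\bR^{\mathrm{pair}}}K^2_{\bR^{\mathrm{pair}}}}{N^2}$ terms in the stated bound, mirroring the discretization estimate used in Lemma~\ref{lemma:lipschtiz_discretization_error} and the bound appearing inside $\psi_{\delta,G}(N)$. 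The graphon-discretization part is $\bx_u^T\int_0^1 \bR^{\mathrm{pair}}_{u,v,s}(G_{u,v}-G^N_{u,v})\bx_v\,dv$; here I bound $|\bR^{\mathrm{pair}}_{u,v,s}| \le C_{\bR^{\mathrm{pair}}}$ entrywise and $|\bx_u|,|\bx_v|\le 1$, reducing the $L_2$-norm over $s$ and $u$ to $C^2_{\bR^{\mathrm{pair}}}|\calS|^2$ times $\sup_{\norm f_{L_2}\le1}\sqrt{\int_0^1(\int_0^1(G_{u,v}-G^N_{u,v})f_v\,dv)^2 du} = C^2_{\bR^{\mathrm{pair}}}|\calS|^2\opnorm{G-G^N}$ — though one must be slightly careful that the relevant test function here (a component of $\bx$) indeed has $L_2$-norm at most $1$, which holds since $\bx$ is a probability-vector-valued step function. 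Combining the three pieces via the triangle inequality and absorbing $\sqrt{|\calS|}$, $C_{D_{x(0)}}$, and the numerical factor $6$ into the constants gives the claimed bound.

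The main obstacle I anticipate is the uniform boundedness of the operator norm of $D_x\bODE(t,\bX)$ over all attainable states $\bX\in\calX^N$ and all $N$: this requires that the constant $C_{D_{x(0)}}$ from the variational Grönwall estimate does not itself grow with $N$, which in turn hinges on the Lipschitz constant $L_{\bFG}$ of the drift (Lemma~\ref{lemma:drift_lipschitz}) being $N$-independent — it is, since it depends only on $C_R$, $\abs{\calS}$, and $\opnorm{G}$, with $\opnorm{G}$ a fixed property of the limiting graphon. A secondary technical point is tracking the combinatorics of the boundary cells in the rate-discretization estimate to recover the precise coefficients, but this is routine bookkeeping of the same flavor as the bound feeding into $\psi_{\delta,G}(N)$ and need not be belabored.
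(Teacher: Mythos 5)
Your overall strategy matches the paper's: peel off $D_x\bODE$ as a bounded linear operator (with the constant controlled uniformly in $\bX$ and $N$ via Gr\"onwall and the $N$-independence of $L_{\bFG}$), then split the drift difference into a rate-discretization part, bounded by the piecewise-Lipschitz discretization estimate of Lemma~\ref{lemma:lipschtiz_discretization_error}, and a graphon part, bounded by $C^2_{\bR^{\text{pair}}}\abs{\calS}^2\opnorm{G-G^N}$. The only substantive difference is cosmetic (you add and subtract $\bR^{\text{pair}}G^N$ where the paper adds and subtracts $\bR^{N,\text{pair}}G$), and your linear Cauchy--Schwarz reduction to $\norm{\bFG-\bFN}_{L_2}$ is if anything cleaner than the paper's quadratic version.

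One step is wrong as stated: you claim the unilateral terms ``agree exactly'' under assumption~\ref{assumption:bound_lipschitz_transitions}. They do not. That assumption forces $r^{N,\text{uni}}_{k,\cdot}=r^{\text{uni}}_{k/N,\cdot}$ only at the grid points, so the step function $\bR^{N,\text{uni}}_{u,s}$ (constant on each cell $((k-1)/N,k/N]$) differs from the continuous rate $\bR^{\text{uni}}_{u,s}$ by up to $O(L_{\bR^{\text{uni}}}/N)$ on each Lipschitz piece, exactly as for the pairwise rates. The paper bounds this term separately (its display~\eqref{eq:difference_dirft_unilateral_rates}) using the same discretization lemma, contributing another $O(1/N)$ term of the same form. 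This does not affect the order of the final bound, so the gap is minor, but the justification you give for dropping the unilateral contribution is not valid; you need the same Lipschitz-discretization argument there as well.
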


\begin{proof}{Proof.}
Implied by the finite values of $\bX \in \calX^N$
\begin{align*}
& \int_0^1 \sum_{s'} \left[D_{x} \ODE(t, \bX) \left( \bFG\bigl(\bODE(t, \bX) \bigr) - \bFN (\bODE(t, \bX)) \right) \right]_{v,s'} dv  \\
& \leq C_{N, D_{x} \ODE} \int_0^1 \sum_{s'}\left( \bFG_{v,s'}\bigl(\bODE(t, \bX) \bigr) - \bFN_{v,s'} (\bODE(t, \bX)) \right)^2dv.
\end{align*}
where $C_{N, D_{x} \ODE}$ is essentially obtained by taking the maximum over all $\bX \in \calX^N$. To conclude it remains to be shown that $\int_0^1 \left( \FG_{v,s'}\bigl(\bODE(t, \bX) \bigr) - \FN_{v,s'} (\bODE(t, \bX)) \right)^2dv$ is small. By definition we are looking at 
\begin{align}
& \int_0^1 \sum_{s'} \left( \FG_{u,s'}\bigl(\bODE(t, \bX) \bigr) - \FN_{u,s'} (\bODE(t, \bX)) \right)^2 du \notag\\
& = \int_0^1 \sum_{s'} \left( \bR^{\text{uni}}_{u,s'}\bODE_{u}(t, \bX) + \bODE_u(t, \bX)^T \int_0^1 \bR^{\text{pair}}_{u,v,s'} G_{u,v}\bODE_v(t, \bX) \ dv \right. \notag\\ 
& \quad \left. - \bR^{N, \text{uni}}_{u,s'}\bODE_u(t, \bX) - \bODE_u(t, \bX)^T \int_0^1 \bR^{N, \text{pair}}_{u,v,s'}
 G^N_{u,v}\bODE_v(t, \bX) dv \right)^2 du \notag \\
& \leq 2 \int_0^1 \sum_{s'}\left( \bigl( \bR^{\text{uni}}_{u,s'} - \bR^{N, \text{uni}}_{u,s'} \bigr) \bODE_u(t, \bX) \right)^2 du \label{eq:difference_dirft_unilateral_rates} \\
& \quad + 2 \int_0^1 \sum_{s'} \left( \bODE_u(t, \bX)^T \bigl( \int_0^1 \bR^{\text{pair}}_{u,v,s'} G_{u,v}\bODE_v(t, \bX)  dv - \int_0^1 \bR^{N, \text{pair}}_{u,v,s'} G^N_{u,v}\bODE_v(t, \bX) dv \bigr) \right)^2 \ du
\end{align}
where the inequality follows as for $f,g\in L_2(0,1]$ it holds that $\norm{f + g}^2 \leq 2( \norm{f}^2 + \norm{g}^2)$. We separately bound the two summands. The application of Lemma \ref{lemma:lipschtiz_discretization_error} directly gives a bound for the first
\begin{align*}
2\int_0^1 \sum_{s'} \left( \bigl( \bR^{\text{uni}}_{u,s'} - \bR^{N, \text{uni}}_{u,s'} \bigr) \bODE_u(t, \bX) \right)^2 du \leq 2(\frac{2 L_{\bR^{\text{uni}}} + 16C^2_{\bR^{\text{uni}}}K_{\bR^{\text{uni}}} }{N} - \frac{16C^2_{\bR^{\text{uni}}}K^2_{\bR^{\text{uni}}} }{N^2}).
\end{align*}
For the second term, first, we add and subtract the term $\bODE_u(t, \bX)^T \int_0^1 \bR^{N, \text{pair}}_{u,v,s'} G_{u,v}\bODE_v(t, \bX)  dv$ and use again the previously applied inequality to get the two terms 
\begin{align}
& 4\int_0^1 \sum_{s'}\left( \bODE_u(t, \bX)^T \bigl( \int_0^1 \bR^{\text{pair}}_{u,v,s'} -  \bR^{N, \text{pair}}_{u,v,s'}\bigr) G_{u,v}\bODE_v(t, \bX)  dv \right)^2 \text{ and } \tag{\romannumeral1} \label{eq:drift_difference_pair_rates}\\
& 4\int_0^1 \sum_{s'}\left( \bODE_u(t, \bX)^T \int_0^1 \bR^{N, \text{pair}}_{u,v,s'} \bigl(G_{u,v} - G^N_{u,v}\bigr) \bODE_v(t, \bX) \bigr) dv \right)^2. \tag{\romannumeral2} \label{eq:drift_difference_graphon_graph}
\end{align}
At its core, it remains to bound the difference of $\bR^{\text{pair}} - \bR^{N, \text{pair}}$ rate matrices and the difference $G - G^N$ between the graphon and the graph.
Similar to the bound for the unilateral rates, we use Lemma \ref{lemma:lipschtiz_discretization_error} to bound Equation \eqref{eq:drift_difference_pair_rates}, i.e., 
\begin{align*}
\eqref{eq:drift_difference_pair_rates} \leq 4 (\frac{2 L_{\bR^{\text{pair}}} + 16C^2_{\bR^{\text{pair}}}K_{\bR^{\text{pair}}} }{N} - \frac{16C^2_{\bR^{\text{pair}}}K^2_{\bR^{\text{pair}}} }{N^2}) .
\end{align*}
The second Equation \eqref{eq:drift_difference_graphon_graph} can be bounded by $4 C_{\bR^\text{pair}}^2 \abs{\calS}^2 \opnorm{G - G^N}$. To obtain this bound, we use the bounds $C_{\bR^{\text{pair}}}$ for $\bR^{N,\text{pair}}, \bR^{\text{pair}}$, the fact that $\bR^{\text{pair}}$ is a $\calS \times \calS$ matrix, and lastly that $\norm{x^G}_{L_2} = 1$. Application of the bounds yields
\begin{align*}
& \sum_{s'} \langle D_{x(0)} \ODE_{u,s}(t, \bX)_{s'}, \FG_{s'}\bigl(\bODE(t, \bX) \bigr) - \FN_{s'} (\bODE(t, \bX))\rangle_{L_2} \\
& \leq 6 C_{ D_{x(0)}} \left(  \frac{2 L_{\bR^{\text{pair}}} + 16C^2_{\bR^{\text{pair}}}K_{\bR^{\text{pair}}} }{N} - \frac{16C^2_{\bR^{\text{pair}}}K^2_{\bR^{\text{pair}}} }{N^2} + C_{\bR^\text{pair}}^2 \abs{\calS}^2 \opnorm{G - G^N} \right)
\end{align*}
which concludes the proof.
{\hfill \Halmos}
\end{proof}

\subsection{Bound on the Approximation of a Piecewise Lipschitz Function (Lemma~\ref{lemma:lipschtiz_discretization_error})}
In several part of the paper, the difference between a piecewise Lipschitz function and a discretized version needs to be quantified. The next lemma states an upper bound depending on the coarseness of the discretization as well as properties induces by the definition of piecewise Lipschitz continuity.

\begin{lemma} \label{lemma:lipschtiz_discretization_error}
Let $B$ be a bounded piecewise Lipschitz function with $K\in \N$ blocks, Lipschitz constant $L_B < \infty$ as defined in \ref{def:piecewise_lipschitz_graphon} and bound $C_B\geq 0$ such that $\abs{B(u,v)} \leq C_B$. Furthermore, let $B^N$ be its discretized version defined by $B^N(u,v) = \sum_{i,j=1}^N B(i/N,j/N)\mathds{1}_{I^N_i}(u)\mathds{1}_{I^N_j}(v)$.
Assuming that $N$ is large enough as in Definition \ref{def:large_enough_N}, it holds for $f\in L_2(0,1]$ with $\norm{f}_{L_2}\leq 1$ that
\begin{align*}
\int_0^1 \abs{\int_0^1 \bigl(B(u,v) - B^N(u,v)\bigr)f(v)dv} du \leq \frac{2 L_B + 16C_{B}^2 K}{N} - \frac{16C_{B}^2 K^2}{N^2}.
\end{align*}
Note, in the case of $B$ being Lipschitz continuous, i.e., $K=0$, the right-hand side reduces to the anticipated bound of $2 L_B/N$.
\end{lemma}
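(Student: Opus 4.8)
The plan is to exploit the grid structure of $B^N$. Partition the unit square into the $N^2$ cells $I^N_i\times I^N_j$ on which $B^N$ is constant and equal to $B(i/N,j/N)$, and classify each cell as \emph{good} if $I^N_i\times I^N_j$ is contained in a single Lipschitz block $\mathcal A_{k_1}\times\mathcal A_{k_2}$ of $B$ (in the sense of Definition~\ref{def:piecewise_lipschitz_graphon}), and \emph{bad} otherwise. On a good cell, since $i/N\in I^N_i$ and $j/N\in I^N_j$ lie in the same block as $(u,v)$, the piecewise-Lipschitz property gives directly $\abs{B(u,v)-B^N(u,v)}=\abs{B(u,v)-B(i/N,j/N)}\le L_B(\abs{u-i/N}+\abs{v-j/N})\le 2L_B/N$; on a bad cell I would only use the crude bound $\abs{B(u,v)-B^N(u,v)}\le 2C_B$.

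Next I would count the bad cells. A column $i$ is bad only if $I^N_i$ contains one of the at most $K-1$ interior breakpoints of the partition of $(0,1]$; since $N$ is large enough that $2/N<\min_k(a_k-a_{k-1})$, no grid interval can contain two breakpoints, so there are at most $K-1$ (hence at most $K$) bad columns, and symmetrically at most $K$ bad rows. Writing $U_{\mathrm{bad}}$ and $V_{\mathrm{bad}}$ for the unions of the bad columns and bad rows, each has Lebesgue measure at most $K/N$, so the bad region $\mathcal B=(U_{\mathrm{bad}}\times(0,1])\cup((0,1]\times V_{\mathrm{bad}})$ has measure at most $\tfrac{2K}{N}-\tfrac{K^2}{N^2}$ by inclusion--exclusion (the map $(a,b)\mapsto a+b-ab$ is increasing in each argument on $[0,1]^2$). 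In the degenerate case $K=0$ there are no bad cells.

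Then I would bound the target quantity by pulling the absolute value inside, $\int_0^1\bigl|\int_0^1(B-B^N)f\,dv\bigr|\,du\le\int_0^1\int_0^1\abs{B(u,v)-B^N(u,v)}\,\abs{f(v)}\,dv\,du$, and split the double integral over good and bad cells. The good part is at most $\tfrac{2L_B}{N}\int_0^1\int_0^1\abs{f(v)}\,dv\,du=\tfrac{2L_B}{N}\norm{f}_{L_1}\le\tfrac{2L_B}{N}$ using Cauchy--Schwarz on $(0,1]$ together with $\norm{f}_{L_2}\le1$. The bad part is at most $2C_B\int\!\!\int_{\mathcal B}\abs{f(v)}\,dv\,du\le 2C_B\,\abs{\mathcal B}\le 2C_B\bigl(\tfrac{2K}{N}-\tfrac{K^2}{N^2}\bigr)$, where the control of $\int\!\!\int_{\mathcal B}\abs{f}$ by the measure of $\mathcal B$ uses that $f$ is pointwise bounded by $1$ a.e. (which is the case in the applications, where $f$ is a coordinate of $\bODE$ and hence valued in $[0,1]$). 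Adding the two contributions and then enlarging the constant in front of the $K/N$ and $K^2/N^2$ terms to obtain the form $\tfrac{2L_B+16C_B^2K}{N}-\tfrac{16C_B^2K^2}{N^2}$, valid for $N$ large enough in the sense of Definition~\ref{def:large_enough_N}, finishes the proof; for $K=0$ only the $2L_B/N$ term survives.

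The main obstacle is the bad region: its Lebesgue measure is genuinely of order $K/N$ and $\abs{B-B^N}$ need not be small there, so the $O(K/N)$ order cannot be improved, and the whole argument hinges on estimating $\int\!\!\int_{\mathcal B}\abs{f}$ by $\abs{\mathcal B}$ rather than by $\sqrt{\abs{\mathcal B}}\,\norm{f}_{L_2}$ — which is exactly where the boundedness of $f$ is essential — together with the large-$N$ hypothesis guaranteeing that each grid interval meets at most one breakpoint, so the bad-cell count stays linear in $K$.
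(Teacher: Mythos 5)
Your good-cell/bad-cell decomposition is sound and is, in substance, the very computation that the paper outsources to the proof of Theorem~1 of \cite{avella-medinaCentralityMeasuresGraphons2018} (the paper's own proof is a one-line Cauchy--Schwarz step followed by that citation). The genuine difference is that you run the argument on $\int_0^1\int_0^1\abs{B-B^N}\,\abs{f}$ directly, whereas the cited route works with $\int_0^1\int_0^1(B-B^N)^2$. Your block counting, the Lipschitz estimate $\abs{B-B^N}\le 2L_B/N$ on good cells, and the inclusion--exclusion bound $\abs{\mathcal B}\le \tfrac{2K}{N}-\tfrac{K^2}{N^2}$ are all correct.

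Two points need attention. First, your proof requires $\abs{f}\le 1$ a.e., which is not among the lemma's hypotheses, and this is not cosmetic: with only $\norm{f}_{L_2}\le 1$ the bad strips contribute order $\sqrt{K/N}$, not $K/N$ --- take $f$ to be $\sqrt{2N}$ times the indicator of the half of a grid cell that straddles a block boundary, where $\abs{B-B^N}$ is of order $C_B$; the left-hand side is then of order $C_B/\sqrt{N}$, which exceeds the stated right-hand side for large $N$. So the $O(K/N)$ rate is unattainable under the stated hypotheses, and the pointwise bound you invoke (which does hold in every application, since $f$ is a coordinate of $\bODE$ valued in $[0,1]$) is exactly what rescues it; note that the Cauchy--Schwarz route only yields the square-root form that appears in $\psi_{\delta,G}$. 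Second, the closing step ``enlarge the constants'' does not go through literally: you have established $\tfrac{2L_B}{N}+\tfrac{4C_BK}{N}-\tfrac{2C_BK^2}{N^2}$, and the inequality $\tfrac{4C_BK}{N}-\tfrac{2C_BK^2}{N^2}\le\tfrac{16C_B^2K}{N}-\tfrac{16C_B^2K^2}{N^2}$ fails whenever $C_B<1/4$ (one cannot freely enlarge the coefficient of the negative $K^2/N^2$ term, and $16C_B^2$ may be smaller than $4C_B$). You have therefore proved a bound of the correct order with different constants rather than the displayed inequality; this is immaterial for how the lemma is used downstream, but it should be stated as such rather than absorbed into a constant-enlargement that is not valid.
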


\begin{proof}{Proof.}
By Cauchy-Schwarz inequality on $L_2(0,1]$ it follows that
\begin{align*}
\int_0^1 \bigl(B(u,v) - B^N(u,v)\bigr)f(v)dv & \leq \int_0^1 \bigl(B(u,v) - B^N(u,v)\bigr)^2 dv \norm{f}^2_{L_2} \\ 
& \leq \int_0^1 \bigl(B(u,v) - B^N(u,v)\bigr)^2 dv.
\end{align*}
For the rest of the proof we refer to \cite{avella-medinaCentralityMeasuresGraphons2018} as the reasoning is identical as for their proof of Theorem 1, staring at Equation (31). The sole adaptation made is to include the case that $B$ can take values outside of $(0,1]$, leading to $2 C_B$ as the bound of the difference $\abs{B(u,v) - B^N(u,v)}$. 
\hfill \Halmos
\end{proof}

\subsection{Taylor Expansion with Remainder for Banach Spaces (Lemma~\ref{thrm:taylor_with_remainder_for_banach_spaces})} \label{sec:taylor_expansion_banach}

For completeness, we reformulate the result from \cite{blanchardMathematicalMethodsPhysics2015} pp. 524-525.

\begin{lemma}\label{thrm:taylor_with_remainder_for_banach_spaces}
Suppose $E, F$ are real Banach spaces, $U \subset E$ an open and nonempty subset and $f\in \mathcal{C}^n(U,F)$ ($n$-times continuously differentiable). Given $x_0 \in U$ choose $r>0$ such that $x_0 + B_r \subset U$, where $B_r$ is the open ball in $E$ with center $0$ and radius $r$. Then for all $\ell\in B_r$ we have, using the abbreviation $(\ell)^k=(\ell,...,\ell)$, 
\begin{align}
f(x_0 + \ell) = \sum_{k=0}^n \frac{1}{k!}f^{(k)}(x_0)(\ell)^k + R_n(f,x_0,\ell), \label{eq:taylor_expansion_definition}
\end{align}
where the remainder $R_n$ has the form 
\begin{align}
R_n\left(f, x_0 , \ell\right)=\frac{1}{(n-1) !} \int_0^1(1-\theta)^{n-1}\left[f^{(n)}\left(x_0+\theta \ell\right)-f^{(n)}\left(x_0\right)\right](\ell)^n d\theta. \label{eq:tayloer_remainder_definition}
\end{align}
\end{lemma}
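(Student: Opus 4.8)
The plan is to reduce the statement to the classical one-dimensional Taylor formula with integral remainder by restricting $f$ to the line segment through $x_0$ in the direction $\ell$. If $\ell=0$ the identity is trivial, so assume $\ell\neq 0$. Since $\ell\in B_r$ we have $\norm{\ell}<r$, hence $\theta\ell\in B_r$ for every $\theta$ in the open interval $I:=(-r/\norm{\ell},\,r/\norm{\ell})$, which contains $[0,1]$; consequently $x_0+\theta\ell\in x_0+B_r\subset U$ and the map $g:I\to F$, $g(\theta):=f(x_0+\theta\ell)$, is well defined.

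First I would show that $g\in\mathcal{C}^n(I,F)$ with $g^{(k)}(\theta)=f^{(k)}(x_0+\theta\ell)(\ell)^k$ for $0\le k\le n$. This follows by induction on $k$: the map $\theta\mapsto x_0+\theta\ell$ is affine, so the chain rule for Fréchet derivatives gives $g'(\theta)=f'(x_0+\theta\ell)(\ell)$; differentiating repeatedly and using that $f^{(k)}$ is continuous and that its last $k$ arguments are held fixed equal to $\ell$ yields the displayed formula together with its continuity in $\theta$.

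Next I would apply the one-dimensional Taylor theorem with integral remainder to $g$ between $0$ and $1$:
\begin{align*}
g(1)=\sum_{k=0}^{n-1}\frac{1}{k!}g^{(k)}(0)+\frac{1}{(n-1)!}\int_0^1(1-\theta)^{n-1}g^{(n)}(\theta)\,d\theta.
\end{align*}
For an $F$-valued $g$ this can be obtained either by repeated integration by parts starting from the fundamental theorem of calculus for continuously differentiable Banach-space-valued functions, or --- avoiding Banach-valued integration by parts altogether --- by applying an arbitrary $\phi\in F^*$ to both sides, using that $\phi$ commutes with the integral and with differentiation, invoking the scalar Taylor formula for $\phi\circ g\in\mathcal{C}^n(I,\R)$, and concluding by the Hahn--Banach theorem since $F^*$ separates the points of $F$.

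Finally, since $\frac{1}{(n-1)!}\int_0^1(1-\theta)^{n-1}\,d\theta=\frac{1}{n!}$, adding and subtracting $g^{(n)}(0)$ inside the integral splits off the $k=n$ term:
\begin{align*}
\frac{1}{(n-1)!}\int_0^1(1-\theta)^{n-1}g^{(n)}(\theta)\,d\theta=\frac{1}{n!}g^{(n)}(0)+\frac{1}{(n-1)!}\int_0^1(1-\theta)^{n-1}\bigl[g^{(n)}(\theta)-g^{(n)}(0)\bigr]\,d\theta.
\end{align*}
Substituting $g(1)=f(x_0+\ell)$, $g^{(k)}(0)=f^{(k)}(x_0)(\ell)^k$ and $g^{(n)}(\theta)=f^{(n)}(x_0+\theta\ell)(\ell)^n$ recovers exactly \eqref{eq:taylor_expansion_definition} with the remainder \eqref{eq:tayloer_remainder_definition}. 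This lemma is essentially a textbook statement, so no step is a genuine obstacle; the only points that need care, $F$ being infinite-dimensional, are the higher-order chain rule in the first step and the justification that continuous linear functionals commute with the remainder integral in the third, and a complete account can be found in \cite{blanchardMathematicalMethodsPhysics2015}.
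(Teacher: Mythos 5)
Your proposal is correct, and it is the standard argument: the paper itself offers no proof of this lemma, merely citing \cite{blanchardMathematicalMethodsPhysics2015}, and your reduction to the one-dimensional Taylor formula with integral remainder via $g(\theta)=f(x_0+\theta\ell)$, followed by splitting off the $k=n$ term by adding and subtracting $g^{(n)}(0)$ under the integral, is exactly the textbook route being invoked. The two points you flag as needing care (the higher-order chain rule for the affine restriction, and passing functionals through the Banach-valued integral via Hahn--Banach) are indeed the only nontrivial steps, and you handle both correctly.
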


\end{appendices}

\end{document}